\numberwithin{equation}{section}
\theoremstyle{plain}
\newtheorem{Thm}{Theorem}[section]
\newtheorem*{Thm*}{Theorem}
\newtheorem{Lem}[Thm]{Lemma}
\newtheorem{Cor}[Thm]{Corollary}
\newtheorem{Prop}[Thm]{Proposition}
 \theoremstyle{definition}
\newtheorem{Def}[Thm]{Definition}
\newtheorem{Rem}[Thm]{Remark}
\newtheorem{?}[Thm]{Problem}
\newcommand{\ovl}{\overline}
\newcommand{\p}{\partial}
\newcommand{\R}{\mathbb{R}}
\newcommand{\e}{\varepsilon}
\newcommand{\X}{\mathfrak{X}}
\newcommand{\de}{\triangleq}
\newcommand{\ml}{\min_{y \in \R} F_l(t,x,y)}
\newcommand{\mr}{\min_{y \in \R} F_r(t,x,y)}
\newcommand{\m}{\min_{y \in \R} F(t,x,y)}
\newcommand{\mln}{\min_{y\leq 0} F_l(t,x,y)}
\newcommand{\mrp}{\min_{y\geq 0} F_r(t,x,y)}
\begin{document}


\title[Shock waves and rarefaction waves under periodic perturbations]{Asymptotic stability of shock waves and rarefaction waves under periodic perturbations for 1-D convex scalar conservation laws}

\author[Z. Xin]{Zhouping XIN}
\thanks{This research is partially supported by Zheng Ge Ru Foundation, Hong Kong RGC Earmarked Research Grants, CUHK-14300917, CUHK-14305315, and CUHK4048/13P, and NSFC/RGC Joint Research Grant N-CUHK 443-14. }
\address[Z. Xin]{The Institute of Mathematical Sciences \&  Department of Mathematics, The Chinese University of Hong Kong,	Shatin, N.T., Hong Kong}
\email{zpxin@ims.cuhk.edu.hk}

\author[Q. Yuan]{Qian YUAN}
\address[Q. Yuan]{The Institute of Mathematical Sciences \&  Department of Mathematics, The Chinese University of Hong Kong,	Shatin, N.T., Hong Kong}
\email{qyuan@math.cuhk.edu.hk}

\author[Y. Yuan]{Yuan YUAN}
\address[Y. Yuan]{South China Research Center for Applied Mathematics and Interdisciplinary Studies, South China Normal University,	Guangzhou, Guangdong, China}
\email{yyuan2102@m.scnu.edu.cn}



\subjclass[2010]{35L03, 35L65, 35L67}
\keywords{conservation laws, shock waves, rarefaction waves, periodic perturbations}

\begin{abstract} 
	In this paper we study large time behaviors toward shock waves and rarefaction waves under periodic perturbations for 1-D convex scalar conservation laws.
	The asymptotic stabilities and decay rates of shock waves and rarefaction waves under periodic perturbations are proved. 
\end{abstract}

\maketitle


\section{Introduction} 
We consider the Cauchy problem for convex scalar conservation laws in one-dimensional case,
\begin{equation}\label{equ1}
		\p_t u(x,t)+\p_x f(u(x,t)) =0, \quad x \in (-\infty, +\infty),\quad t>0,
\end{equation}
\begin{equation}\label{ic1}
		u|_{t=0}=
	\begin{cases}
		\ovl{u}_l+w_0(x)& \text{if~} x<0,\\
		\ovl{u}_r+w_0(x)& \text{if~} x>0,
	\end{cases}		
\end{equation}
where $f(u) \in C^2(\R) \text{ satisfies } f''(u)>0$, $\ovl{u}_l$ and $\ovl{u}_r$ are two distinct constants, $w_0(x) \in L^{\infty}(\R)$ is any periodic function with period $p>0$, and $\ovl{w}$ is its average
\begin{equation}\label{avg}
\ovl{w}\de \frac{1}{p} \int_0^p w_0(x) dx.
\end{equation}
When $w_0(x) \equiv 0$, the problem is Riemann problem, and its entropy solutions are shock waves if $\ovl{u}_l>\ovl{u}_r$ or rarefaction waves if $\ovl{u}_l<\ovl{u}_r$. In this paper, we plan to study the asymptotic stabilities of the solutions to \eqref{equ1} and \eqref{ic1} with bounded periodic perturbation $ w_0(x) $. 

The theory of convex scalar conservation laws is one of the most classical theory in PDE, and far-reaching results have been obtained.
As is well known, for any $L^{\infty}$ initial data, there exists a unique entropy solution to \eqref{equ1} in $Lip((0,+\infty),L^1_{loc}(\R))$ (\cite[Theorem~16.1]{smo}).

For the study of large time behaviors of entropy solutions to \eqref{equ1}, when initial data is in $L^\infty \cap L^1 $, the entropy solution decays to $0$ in $L^\infty$ norm at a rate $t^{-1}$. When initial data is bounded and has compact support, the entropy solution decays to the N-wave in $L^1$ norm at a rate $t^{-\frac{1}{2}}$, see \cite{hopf} and \cite{Lax}. 
While for the periodic initial data, which is obviously not in $L^1$, Glimm J. and P. Lax \cite[Theorem 5.2]{GD} seem to be the first to state that the entropy solutions decay to their average at a rate $t^{-1}$. 

It is well known that shock waves and rarefaction waves are two important and typical entropy solutions in genuinely nonlinear conservation laws
, and their stability problems are of great interest not only in mathematics but also in physics.
If the initial perturbation is compactly supported, Liu in \cite{Liu1978} proved that for shock waves, the perturbed solution becomes a translation of shock waves after a finite time; while for rarefaction waves, we only have that the perturbed solution converges to the centered rarefaction waves at a rate $t^{-\frac{1}{2}}$ in $L^{\infty}$ norm. 

However, when the perturbation remains oscillating at the infinity, like the periodic one, the stability of these simple waves are still open. Here the initial data \eqref{ic1} is neither integrable nor periodic on $\mathbb{R}$. 

\vspace{0.5cm}

In this paper, we prove that for any given bounded periodic perturbation, the shock and rarefaction wave are both asymptotically stable. More precisely, we show that for shock waves, after a finite time, the perturbed shock consists of actually two periodic functions contacting with each other at a shock curve, and this shock curve tends to the background one at a rate $t^{-1}$, see Theorem \ref{thmshock}; while for perturbation of a rarefaction wave data, the solution consists of three parts separated by two distinct characteristics, where on two sides  the solution is periodic, and the perturbed rarefaction wave tends to the background one in $L^{\infty}$ norm at a rate $t^{-1}$, see Theorem \ref{thmrare}. 
The stability result for shock profiles under periodic perturbations for viscous case will be shown in a forthcoming paper. 
Furthermore, we give a more exquisite convergent rate for the problem \eqref{equ1} with periodic initial data than Glimm and Lax \cite[Theorem 5.2]{GD}, see Theorem \ref{thmper},  and we also give a simple example \eqref{2constants} to show that this rate is optimal in some sense, see \eqref{lb1}. 

To prove the main results stated above, we make much use of some properties of generalized characteristics of convex scalar convex conservation laws (see Proposition \ref{propgen}, Lemma \ref{lemgre}) and the existence of divides of periodic solutions(see Lemma \ref{propdiper}),  which is a very special feature of periodic solutions and plays a essential role in our proof. Such concepts and tools were developed by Dafermos in \cite{Dafe1}, \cite{Dafe}.

At last, before the end of our paper, we present an alternative proof, inspired by Hopf-Cole transform in \cite{hopf}, to prove \eqref{glue} in Theorem \ref{thmshock} when $f(u)=u^2/2$.

\vspace{0.8cm}
\section{Statement of main results} 
Before stating the main results of this paper, we firstly list the following result, which can be derived from \cite[Theorem 5.2]{GD} or \cite[Theorem 3.1]{Dafe1},
\begin{Thm*}
	Suppose that $u_0\in L^{\infty}$ is a periodic function of period $p$ with its average $\ovl{u}=\frac{1}{p}\int_0^p u_0(x)~dx$. Then for any $t>0$, the entropy solution $u(x,t)$ to \eqref{equ1} with initial data $u_0(x)$ is also  periodic of period $p$ with the same average $\ovl{u}$, and also
	\begin{equation}\label{lb13}
		|u(x,t)-\ovl{u}| \leq \frac{C}{t}, \quad \forall~ t>0,
	\end{equation}
	where $C$ depends on $ p, \ovl{u}, f$.
\end{Thm*}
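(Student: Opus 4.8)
The plan is to combine three ingredients: translation invariance, conservation of the period average, and Oleinik's one-sided Lipschitz estimate. First, the periodicity of $u(\cdot,t)$ is immediate. Since \eqref{equ1} is invariant under the spatial translation $x\mapsto x+p$ and $u_0(\cdot+p)=u_0(\cdot)$, the function $(x,t)\mapsto u(x+p,t)$ is again an entropy solution with the same initial data; by the uniqueness of entropy solutions in $Lip((0,+\infty),L^1_{loc}(\R))$ it must coincide with $u$, so $u(x+p,t)=u(x,t)$ for all $t>0$. Next, to see that the average is preserved, I would integrate \eqref{equ1} over one spatial period: for a.e. $t>0$,
$$\frac{d}{dt}\int_0^p u(x,t)\,dx = -\big(f(u(p,t))-f(u(0,t))\big)=0,$$
the flux terms cancelling by periodicity. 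Hence $\frac1p\int_0^p u(x,t)\,dx\equiv \ovl{u}$, and since the mean over a period equals $\ovl u$ it follows that $\operatorname{ess\,inf}_x u(x,t)\le \ovl u\le \operatorname{ess\,sup}_x u(x,t)$.

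The heart of the matter is to control the oscillation of $u(\cdot,t)$. Because $f$ is uniformly convex on the range of the solution --- the maximum principle confines $u(x,t)$ to $[\operatorname{ess\,inf} u_0,\operatorname{ess\,sup} u_0]$, on which $f''\ge \alpha>0$ for some $\alpha$ --- the Oleinik entropy estimate applies and gives the one-sided Lipschitz bound
$$u(x_2,t)-u(x_1,t)\le \frac{x_2-x_1}{\alpha t},\qquad x_1<x_2,\ t>0.$$
I would use this together with periodicity to bound the total oscillation over one period. Writing $M(t)=\operatorname{ess\,sup}_x u(\cdot,t)$ and $m(t)=\operatorname{ess\,inf}_x u(\cdot,t)$, I pick (approximate) points $x_m,x_M$ in a single period where $u$ is near $m(t)$ and $M(t)$ respectively; if $x_M<x_m$ I replace $x_M$ by $x_M+p$, which leaves the value unchanged by periodicity, so that in either case the two points differ by at most $p$. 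The one-sided estimate then yields
$$M(t)-m(t)\le \frac{p}{\alpha t}.$$
Combining this with $m(t)\le \ovl u\le M(t)$ gives $|u(x,t)-\ovl u|\le M(t)-m(t)\le \frac{p}{\alpha t}$, which is \eqref{lb13} with $C=p/\alpha$.

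The one genuine obstacle is making this oscillation step rigorous at the level of entropy/$BV$ solutions: the solution generically develops downward jumps, so $u(\cdot,t)$ need not be continuous, and the points $x_m,x_M$ must be selected as Lebesgue points or approximate extrema while the one-sided Lipschitz inequality is read in the a.e.\ sense. A secondary point is the dependence of $\alpha$ on the data: a priori $\alpha=\inf f''$ is taken over a range controlled by $\|u_0\|_{L^\infty}$ rather than by $\ovl u$ alone. Since the oscillation decays, for large $t$ this range collapses into a neighborhood of $\ovl u$, so after a finite data-dependent time $\alpha$ may be replaced by $\inf f''$ near $\ovl u$, matching the stated dependence of $C$ on $p,\ovl u,f$. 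Alternatively --- and closer to the machinery the paper develops --- one may run the argument through backward generalized characteristics: the characteristic through $(x,t)$ is a straight line with foot $y(x,t)$ at time $0$ and $u(x,t)=(f')^{-1}\big((x-y(x,t))/t\big)$, while periodicity forces $y(x+p,t)=y(x,t)+p$. Tracking how the feet of the characteristics issuing from one period spread over a spatial interval of length $p$ bounds $(x-y(x,t))/t$ within $O(1/t)$ of $f'(\ovl u)$, and the Lipschitz continuity of $(f')^{-1}$ on the solution range again yields \eqref{lb13}.
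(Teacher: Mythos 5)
Your proof is correct, and it takes a genuinely different route from the paper's. The paper never proves this statement directly (it is quoted from Glimm--Lax and Dafermos); the proof it does supply, for the sharper Theorem \ref{thmper} in the appendix, runs through generalized characteristics and divides: by Proposition \ref{propdiper} the lines $x=f'(\ovl{u})t+\ovl{x}+Np$ are divides, backward characteristics cannot cross them, so the foot of the backward characteristic through $(x,t)$ stays in the same cell of length $p$, which gives $|f'(u(x,t))-f'(\ovl{u})|\le p/t$ outright. You instead combine the maximum principle, conservation of the period mean, and Oleinik's one-sided Lipschitz estimate; your oscillation argument (shifting the maximizer by $p$ so that it lies to the right of the minimizer within distance $p$) is sound, and $m(t)\le\ovl{u}\le M(t)$ correctly converts the oscillation bound into \eqref{lb13}. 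Two trade-offs are worth recording. First, your constant is $C=p/\alpha$ with $\alpha=\inf f''$ over the range of $u_0$, hence it depends on $\|u_0\|_{L^\infty}$ and not only on $p,\ovl{u},f$; your proposed repair does not fully close this, because the waiting time after which the solution's range collapses near $\ovl{u}$ is itself data-dependent, so the constant covering the initial time interval still involves $\|u_0\|_{L^\infty}$. (This looseness is, admittedly, shared by the theorem as stated; the divides argument is cleaner in that the confinement of characteristic feet, hence the bound on $f'(u)$, is independent of the amplitude of $u_0$.) Second, the Oleinik route gives asymptotically at best $|u-\ovl{u}|\le p/(f''(\ovl{u})t)+o(1/t)$, since mean conservation only places $\ovl{u}$ somewhere between $m(t)$ and $M(t)$; the paper's appendix argument shows that $\ovl{u}$ sits asymptotically at the midpoint of the range ($z(t)\to p/2$), gaining the optimal factor $\tfrac12$ in \eqref{lb11}, with a matching example showing sharpness. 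Your closing sketch via the feet of backward characteristics is exactly the paper's mechanism, so either road suffices for the statement as given, but only the characteristics/divides road reaches the refined result.
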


\begin{Rem}
	For the asymptotic behavior of periodic solutions, after Glimm and Lax's result, Dafermos \cite[Theorem 3.1]{Dafe1} gave a more exquisite description of the asymptotic behavior, which behaves like a saw-toothed profile. In this paper we can give an optimal bound of $ \|u-\ovl{u}\|_{L^\infty} $ for the periodic solutions. This bound is more accurate than the results of Glimm and Lax, and it is optimal because it can be achieved for some special initial data.  Since this result is not related with the stability problems of shock and rarefaction waves,  we place the corresponding theorem and proof in the appendix.
\end{Rem}

In \eqref{ic1} we can assume that perturbation $w_0(x)$ has zero average
\begin{equation}\label{ave0}
\ovl{w}\de \frac{1}{p} \int_0^p w_0(x) dx =0
\end{equation} by replacing $\ovl{u}_l, \ovl{u}_r$ with $\ovl{u}_l+\ovl{w}, \ovl{u}_r+\ovl{w}$ respectively if necessary.

For $\ovl{u}_l>\ovl{u}_r$, the shock wave, $u^S$ is given by,
\begin{equation}\label{shock}
u^S(x,t)=\begin{cases}
\ovl{u}_l, & \text{~ if~ }x<st;\\
\ovl{u}_r, & \text{~ if~ }x>st.
\end{cases}
\quad \quad \text{~where } \quad s=\frac{f(\ovl{u}_l)-f(\ovl{u}_r)}{\ovl{u}_l-\ovl{u}_r}.
\end{equation}
and for $\ovl{u}_l<\ovl{u}_r$, the rarefaction wave, $u^R$ is,
\begin{equation}\label{rare}
u^R(x,t)\de 
\begin{cases}
\ovl{u}_l, &\text{~if~ } \frac{x}{t} < f'(\ovl{u}_l);\\
(f')^{-1}(\frac{x}{t}), &\text{~if~} f'(\ovl{u}_l)\leq \frac{x}{t}\leq f'(\ovl{u}_r);\\
\ovl{u}_r, &\text{~if~} \frac{x}{t}> f'(\ovl{u}_r);
\end{cases}
\end{equation}

In the rest of this paper, we will use the following notations to represent  different entropy solutions to problem \eqref{equ1} with different initial data, 
\begin{equation}\label{dsol}
	\begin{aligned}
	    w(x,t): &\text{~the entropy solution to \eqref{equ1} with } w(x,0)=w_0(x); \\
	    u_l(x,t): &\text{~the entropy solution to \eqref{equ1} with } u_l(x,0)=\ovl{u}_l+w_0(x);\\
	    u_r(x,t): &\text{~the entropy solution to \eqref{equ1} with } u_r(x,0)=\ovl{u}_r+w_0(x).
	\end{aligned}
\end{equation}

Then by \eqref{lb13}, one has
\begin{equation}\label{lb2}
|u_l(x,t)-\ovl{u}_l| \leq \frac{C}{t}, \quad |u_r(x,t)-\ovl{u}_r| \leq \frac{C}{t}, \quad \text{for~} \forall~ t>0, ~a.e.~x.
\end{equation}

Also we define the following extremal forward generalized characteristics (see definitions in Definition \ref{defgen} and Proposition \ref{propgen}) issuing from the origin $(0,0)$:
\begin{equation}\label{dX}
\begin{aligned}
X_-(t): \quad &\text{the minimal generalized characteristic associated with}~u\\
X_+(t): \quad &\text{the maximal generalized characteristic associated with}~u\\
X_{r-}(t): \quad &\text{the minimal generalized characteristic associated with}~u_r\\
X_{l+}(t): \quad &\text{the maximal generalized characteristic associated with}~u_l
\end{aligned}
\end{equation}
\vspace{0.5cm}

Then the main results of this paper are stated as follows:

\begin{Thm} \label{thmshock} 
	Suppose that $\ovl{u}_l>\ovl{u}_r$.
	Then for any periodic perturbation $w_0(x) \in L^{\infty}(\R) $ satisfying \eqref{ave0}, there exist a finite time $T_S>0$, and a unique curve  $X(t) \in \text{Lip}~(T_S,+\infty)$, which is actually a shock, such that for any $t>T_S$,
	\begin{equation} \label{glue}
	u(x,t)=\begin{cases}
	u_l(x,t),\quad  \text{if~} x<X(t),\\
	u_r(x,t),\quad \text{if~} x>X(t).
	\end{cases}
	\end{equation} 
	Moreover, 
	\begin{equation}\label{lb}
	\sup_{x<X(t)} |u(x,t)-\ovl{u}_l| +\sup_{x>X(t)} |u(x,t)-\ovl{u}_r| + |X(t)-st| \leq \frac{C}{t}, ~\forall~ t>T_S,
	\end{equation}
	Here $C$ and $T_S$ depend on $p, \ovl{u}_l, \ovl{u}_r, f$.
\end{Thm}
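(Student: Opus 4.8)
The plan is to show that the perturbed solution $u$ decomposes globally into the two periodic solutions $u_l$ and $u_r$ separated by a single shock, and then to read off the decay rates from the periodic estimate \eqref{lb2} together with a conservation-of-mass argument for the shock location. The starting point is a localization statement: by the structure of generalized characteristics (Proposition \ref{propgen} and Lemma \ref{lemgre}), the extremal forward characteristics $X_-(t)\le X_+(t)$ issuing from the origin for $u$, defined in \eqref{dX}, bound the region influenced by the jump at the origin, so that the backward characteristics emanating from any point $(x,t)$ with $x<X_-(t)$ (resp.\ $x>X_+(t)$) strike the initial line at a negative (resp.\ positive) abscissa, by the non-crossing property. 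Since $u$ and $u_l$ share the same initial data on $\{x<0\}$, while $u$ and $u_r$ share it on $\{x>0\}$, the uniqueness of the value carried by backward characteristics yields
\begin{equation*}
u(x,t)=u_l(x,t)\ \text{ for }\ x<X_-(t),\qquad u(x,t)=u_r(x,t)\ \text{ for }\ x>X_+(t).
\end{equation*}

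Next I would control the location of $X_-,X_+$ and close the gap between them using divides. By Lemma \ref{propdiper}, the periodic solutions $u_l$ and $u_r$ admit divides, and each such divide is asymptotic to a straight line of slope $f'(\ovl{u}_l)$, resp.\ $f'(\ovl{u}_r)$; in particular $X_{l+}(t)/t\to f'(\ovl{u}_l)$ and $X_{r-}(t)/t\to f'(\ovl{u}_r)$. The Lax entropy inequalities $f'(\ovl{u}_l)>s>f'(\ovl{u}_r)$, which hold because $f''>0$ and $\ovl{u}_l>\ovl{u}_r$, then force, for all sufficiently large $t$, the separation $X_{r-}(t)<X_-(t)\le X_+(t)<X_{l+}(t)$, so that the whole transition region drifts with the shock speed $s$ while being trapped between the two periodic patterns. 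The crucial point is that $X_-$ and $X_+$ must in fact coincide after a finite time: a forward characteristic of $u$ issuing from the origin is a shock wherever it separates a larger left state from a smaller right state, and the oscillatory structure created near the origin by $w_0$ is absorbed by this shock in finite time because all interior characteristics run into it. I would make this precise by showing that the maximal backward characteristic from $(X_-(t),t)$ and the minimal one from $(X_+(t),t)$ are squeezed together once $t$ exceeds a threshold $T_S$ depending only on $p,\ovl{u}_l,\ovl{u}_r,f$, whence $X_-(t)=X_+(t)=:X(t)$ is a genuine shock and \eqref{glue} holds.

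With \eqref{glue} established, $X(t)$ is Lipschitz and satisfies the Rankine--Hugoniot relation $\dot X(t)=\big(f(u_l(X,t))-f(u_r(X,t))\big)\big/\big(u_l(X,t)-u_r(X,t)\big)$, and the two supremum bounds in \eqref{lb} are immediate from \eqref{lb2}. For the shock position I would avoid integrating $\dot X-s$ directly, since $|u_l(X,t)-\ovl{u}_l|,|u_r(X,t)-\ovl{u}_r|\le C/t$ only yields the weaker bound $|X(t)-st|\le C\log t$. Instead I would use conservation of mass on the strip between the shock $X(t)$ and a divide $\zeta_l(t)=f'(\ovl{u}_l)t+O(1)$ of $u_l$, and symmetrically on the right, exploiting that $w_0$ has zero average \eqref{ave0} so that the net excess mass carried by $u_l$ over $\ovl{u}_l$ across a full period vanishes. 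Balancing the fluxes across $X$ and across the divide, the oscillatory contributions cancel by periodicity, and one is left with the optimal estimate $|X(t)-st|\le C/t$.

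The main obstacle, I expect, is the finite-time merging of $X_-$ and $X_+$ into a single shock, together with the sharp $t^{-1}$ rate for $|X(t)-st|$. The merging requires quantitative control of how fast the characteristics born from the perturbation near the origin are swallowed by the forming shock, which is exactly where the fine properties of divides and generalized characteristics from \cite{Dafe1,Dafe} must be used; and upgrading the shock-position estimate from the naive $\log t$ to the optimal $t^{-1}$ hinges on the exact cancellation furnished by the zero-average normalization \eqref{ave0} and the periodicity of $u_l,u_r$, rather than on their pointwise decay alone.
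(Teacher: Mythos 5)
Your overall architecture --- localize $u$ to the periodic solutions $u_l,u_r$ by extremal forward characteristics, then obtain $|X(t)-st|\le C/t$ by a conservation-of-mass argument between divides, using the zero-average normalization \eqref{ave0} for the cancellation --- coincides with the paper's, and your shock-position estimate is essentially the paper's Green's-formula computation in the trapezium bounded by the divides $\Gamma_l^{-N}$ and $\Gamma_r^N$ of Corollary \ref{corgamma} (you are also right that naively integrating $\dot X-s$ only gives $\log t$). However, there is a genuine gap exactly at the step you yourself flag as the main obstacle: the finite-time coincidence $X_-(t)=X_+(t)$. The decomposition you assert is the \emph{non-overlapping} one, $u=u_l$ for $x<X_-(t)$ and $u=u_r$ for $x>X_+(t)$, which says nothing about the strip $X_-(t)\le x\le X_+(t)$; your proposed merging mechanism (``the oscillatory structure is absorbed by the shock in finite time because all interior characteristics run into it,'' squeezing the backward characteristics from $(X_\pm(t),t)$) is a heuristic that is never made quantitative, so \eqref{glue} --- and with it the existence, uniqueness, Lipschitz continuity and shock nature of $X(t)$ --- is not actually proved.

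The paper closes this gap with a stronger, \emph{overlapping} localization (Lemma \ref{lemglue} and Proposition \ref{propglue}): $u=u_l$ for $x<X_+(t)$ and $u=u_r$ for $x>X_-(t)$, so the two regions of validity overlap on the whole strip. This is not a domain-of-dependence triviality; its proof uses the integral identity \eqref{tri} for backward characteristics of two \emph{different} entropy solutions, plus a contradiction argument comparing maximal backward characteristics of $u$ and $u_l$. Once the overlap is available, the merging becomes a two-line argument: if $X_-(t)<X_+(t)$, then $u_l\equiv u_r$ on $(X_-(t),X_+(t))$, but \eqref{lb2} gives $u_l(x,t)-u_r(x,t)\ge \ovl{u}_l-\ovl{u}_r-2C/t>0$ for all $x$ once $t>T_S\de 2C/(\ovl{u}_l-\ovl{u}_r)$, a contradiction; hence $X_-\equiv X_+\de X$ for $t>T_S$. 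So the missing idea in your proposal is precisely this overlapping version of the localization lemma (or some substitute quantitative estimate inside the strip); without it, the rest of your argument --- the sup bounds from \eqref{lb2} and the divide-based mass balance for $|X(t)-st|$ --- remains conditional on an unproved \eqref{glue}.
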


\begin{Thm}\label{thmrare}
	Suppose that $\ovl{u}_l<\ovl{u}_r$.
	Then for any periodic perturbation $w_0(x) \in L^{\infty}(\R)$
    satisfying \eqref{ave0},
    and for any $t>0$, 
	\begin{equation}
	|u(x,t)-u^R(x,t)| \leq \frac{C}{t}, \quad a.e. ~ x\in \R.
	\end{equation}
	where $C$ depends on $p, \ovl{u}_l, \ovl{u}_r, f$.
\end{Thm}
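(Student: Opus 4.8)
The plan is to describe $u$ through the generalized characteristics of the three solutions $u,u_l,u_r$ in \eqref{dsol}, showing that $u$ coincides with the two periodic solutions $u_l,u_r$ outside a rarefaction fan and is pinned to $u^R$ inside it. First I would reduce to large times. Since the initial data of $u$ lies pointwise between those of $u_l$ and $u_r$ (equality on $\{x<0\}$ with $u_l$ and on $\{x>0\}$ with $u_r$, and $\ovl u_l<\ovl u_r$), the comparison principle for entropy solutions gives $u_l\le u\le u_r$; in particular $u$ and $u^R$ are uniformly bounded, so $|u-u^R|\le C_0\le C_0/t$ for $0<t\le 1$ and it suffices to treat $t\ge 1$.

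Next I would establish the three-region structure announced in the introduction: there are two separating curves such that
\[
u(x,t)=u_l(x,t)\ \text{ for } x\le X_{l+}(t), \qquad u(x,t)=u_r(x,t)\ \text{ for } x\ge X_{r-}(t),
\]
with $X_{l+},X_{r-}$ the extremal forward characteristics from the origin in \eqref{dX} (which bound the fan of $u$ and may be identified with its own extremal characteristics $X_-,X_+$). To prove the right-hand identity I would compare $u$ with $u_r$: their data agree on $\{x>0\}$, and because $\ovl u_l<\ovl u_r$ the characteristic speeds carried from $\{x<0\}$ are strictly smaller, so by finite propagation speed together with Proposition \ref{propgen} and Lemma \ref{lemgre} no information from the left can overtake the minimal forward characteristic $X_{r-}$ of $u_r$; hence $u=u_r$ on $\{x\ge X_{r-}(t)\}$, and symmetrically $u=u_l$ on $\{x\le X_{l+}(t)\}$.

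To control the positions of the separators I would invoke Lemma \ref{propdiper}: $X_{l+}$ and $X_{r-}$ are divides of the periodic solutions $u_l$ and $u_r$, hence asymptotic to the lines of average speed, so that
\[
|X_{l+}(t)-f'(\ovl u_l)\,t|\le C, \qquad |X_{r-}(t)-f'(\ovl u_r)\,t|\le C .
\]
Since $f'(\ovl u_l)<f'(\ovl u_r)$ this forces $X_{l+}(t)<X_{r-}(t)$ for $t$ large, and on the left region it gives $x/t\le f'(\ovl u_l)+C/t$, so by the Lipschitz continuity of $(f')^{-1}$ one has $|u^R-\ovl u_l|\le C/t$ there; combined with \eqref{lb2} this yields $|u-u^R|\le C/t$ on $\{x\le X_{l+}(t)\}$, and symmetrically on $\{x\ge X_{r-}(t)\}$. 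It remains to handle the fan $X_{l+}(t)<x<X_{r-}(t)$, where the same divide bounds guarantee that $x/t$ stays within $O(1/t)$ of the compact range $[f'(\ovl u_l),f'(\ovl u_r)]$.

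Inside the fan I would trace the minimal backward characteristic of $u$ from $(x,t)$: by Proposition \ref{propgen} it is a genuine straight characteristic of slope $f'(u(x,t))$ along which $u$ is constant, and it cannot cross the barriers $X_{l+},X_{r-}$ (which are characteristics of $u$ in the regions where $u=u_l,u_r$). Since both barriers emanate from the origin, the characteristic is funneled into a bounded neighborhood of it, emanating from some $(\eta,\tau)$ with $|\eta|+\tau\le C$ independent of $t$; granting this, the slope satisfies $f'(u(x,t))=(x-\eta)/(t-\tau)=x/t+O(1/t)$ uniformly over the fan, and inverting $(f')^{-1}$ gives $|u(x,t)-u^R(x,t)|\le C/t$. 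Assembling the three regions proves Theorem \ref{thmrare} for $t\ge 1$, the small-time case being already settled. \emph{The hard part} will be this last quantitative confinement of the fan characteristics to an $O(1)$ window about the origin — i.e.\ showing that the perturbed rarefaction is genuinely centered up to a bounded shift — for which I expect the divide structure (Lemma \ref{propdiper}) and the decay \eqref{lb2} of the periodic pieces to be combined; every other step reduces to comparison, the divide estimates, and the Lipschitz continuity of $(f')^{-1}$.
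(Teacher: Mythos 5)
Your outer-region analysis (the gluing of Proposition \ref{propglue}, trapping $X_{l+}$ and $X_{r-}$ between consecutive divides from Corollary \ref{corgamma}, and Lipschitz inversion of $f'$) matches the paper, up to the slip that $X_{l+},X_{r-}$ are not themselves divides but are merely confined between the divides $\Gamma_l^{-1},\Gamma_l^{0}$ and $\Gamma_r^{-1},\Gamma_r^{0}$. The genuine gap is in the fan, precisely the step you flag as hard. You assert that a backward characteristic of $u$ issuing from a fan point ``cannot cross the barriers $X_{l+},X_{r-}$ (which are characteristics of $u$ in the regions where $u=u_l,u_r$)''. But by \eqref{dX} these curves are generalized characteristics of $u_l$ and $u_r$, \emph{not} of $u$, and in general they fail the differential inclusion for $u$, so nothing prevents backward characteristics of $u$ from crossing them. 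Concretely, take Burgers' equation, $w_0=-1$ on $(0,p/2)$, $w_0=+1$ on $(p/2,p)$, and $\ovl{u}_r-\ovl{u}_l>2$. Then $u_l$ has a downward jump at the origin, so $X_{l+}(t)=\ovl{u}_l t$ for small $t$, while the data of $u$ jump upward from $\ovl{u}_l+1$ to $\ovl{u}_r-1$ at $x=0$; consequently $u\equiv \ovl{u}_l+1$ on the strip $\ovl{u}_l t<x<(\ovl{u}_l+1)t$, which lies inside the fan, and the backward characteristics there have slope $\ovl{u}_l+1$, cross $X_{l+}$ at positive times, and land at points of $(-p/2,0)$ rather than at the origin (note also $f'(u(X_{l+}(t)\pm,t))=\ovl{u}_l+1\neq \ovl{u}_l=X_{l+}'(t)$, so $X_{l+}$ is indeed not a characteristic of $u$). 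Moreover your claim proves too much: since $X_{l+}(0)=X_{r-}(0)=0$, no crossing would squeeze every fan characteristic to the origin and give $f'(u(x,t))=x/t$ \emph{exactly}, which is the conclusion of Theorem \ref{thmrare2} and in the paper requires the extra hypothesis $\int_0^x w_0(y)\,dy\ge 0$ on $[0,p]$; it cannot hold for general zero-mean $w_0$.

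The repair is exactly the paper's device: use the divides themselves, $\Gamma_l^{-1}(t)=a-p+f'(\ovl{u}_l)t$ and $\Gamma_r^{0}(t)=a+f'(\ovl{u}_r)t$, as the barriers. Since $\Gamma_l^{-1}\le X_{l+}$ and $X_{r-}\le \Gamma_r^{0}$, the gluing gives $u(\Gamma_l^{-1}(t)-,t)=u_l(\Gamma_l^{-1}(t)-,t)\equiv\ovl{u}_l$, hence $u(\Gamma_l^{-1}(t)+,t)\le \ovl{u}_l$ by the entropy condition; a backward characteristic of $u$ crossing $\Gamma_l^{-1}$ from the right at a time $\tau>0$ would have slope $f'(u(\Gamma_l^{-1}(\tau)+,\tau))>f'(\ovl{u}_l)$, contradicting strict convexity of $f$ (and symmetrically at $\Gamma_r^{0}$). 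Therefore the feet of all backward characteristics from the strip $\Gamma_l^{-1}(t)<x<\Gamma_r^{0}(t)$ lie in $[a-p,a]\subset(-p,p)$: this is your ``$O(1)$ window'', but it is centered at the divide feet, not at the origin, and it yields $|f'(u(x,t))-x/t|\le p/t$, hence $|u-u^R|\le C/t$, for \emph{all} $t>0$, with no need for the large-time reduction or for $X_{l+}<X_{r-}$. So the architecture of your proof is the paper's, but the pivotal no-crossing claim must be run with $\Gamma_l^{-1},\Gamma_r^{0}$ in place of $X_{l+},X_{r-}$; as written, that step is false.
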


\begin{Thm}\label{thmshock2}
		Suppose that the assumptions of Theorem \ref{thmshock} hold, and  additionally $f(u)=u^2/2$, i.e., \eqref{equ1} is the Burger's equation. Then $$\text{when~ } t>T_S \text{~and~ } \dfrac{(\ovl{u}_l-\ovl{u}_r)t}{p} \text{~is an integer},\quad X(t)=st.$$
\end{Thm}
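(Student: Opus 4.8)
The plan is to exploit the Lax--Oleinik (Hopf) representation for Burgers' equation, which is the variational counterpart of the Hopf--Cole transform alluded to just before Theorem \ref{thmshock}. Set $W_0(x)\de\int_0^x w_0(y)\,dy$; by the zero-average normalization \eqref{ave0} the primitive $W_0$ is \emph{periodic} with period $p$, and this is the only place that \eqref{ave0} is essential. Since $f'(u)=u$ here, the shock speed is $s=(\ovl{u}_l+\ovl{u}_r)/2$, and writing $d\de\ovl{u}_l-\ovl{u}_r>0$ the integrality hypothesis reads $N\de dt/p\in\mathbb{Z}$. I introduce the Hamilton--Jacobi potentials
\[
V_l(x,t)=\min_{y\in\R}\Big[\ovl{u}_l\,y+W_0(y)+\tfrac{(x-y)^2}{2t}\Big],\qquad V_r(x,t)=\min_{y\in\R}\Big[\ovl{u}_r\,y+W_0(y)+\tfrac{(x-y)^2}{2t}\Big],
\]
so that $\p_x V_l=u_l$ and $\p_x V_r=u_r$, and the potential $V$ of $u$ is built from the same quadratic kernel with the piecewise primitive of \eqref{ic1}.

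First I would characterize the shock as a crossing of these two potentials. Splitting the minimization defining $V$ at $y=0$ and using $\ovl{u}_l>\ovl{u}_r$ gives the elementary identity $V=\min\{V_l,V_r\}$; consequently the shock $X(t)$ is precisely the locus where $V$ switches branches, i.e. a zero of $D(x,t)\de V_l(x,t)-V_r(x,t)$. Because $\p_x D=u_l-u_r\ge d-2C/t>0$ a.e. for $t>T_S$ by \eqref{lb2} (enlarging $T_S$ if necessary so that $T_S>2C/d$), the function $D(\cdot,t)$ is strictly increasing, so its zero is unique; together with Theorem \ref{thmshock} this identifies that zero with $X(t)$. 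Hence it suffices to prove $D(st,t)=0$ whenever $N\in\mathbb{Z}$.

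Second, I would carry out the exact computation at $x=st$. Completing the square in each potential, the linear term $(\ovl{u}_l-s)y=\tfrac d2 y$ (resp. $(\ovl{u}_r-s)y=-\tfrac d2 y$) combines with $\tfrac{(st-y)^2}{2t}$, so that up to one common additive constant $\tfrac{s^2t}{2}-\tfrac{d^2t}{8}$,
\[
V_l(st,t)=\mathrm{const}+\min_{y\in\R}\Big[\tfrac1{2t}\big(y+\tfrac{dt}{2}\big)^2+W_0(y)\Big],\qquad V_r(st,t)=\mathrm{const}+\min_{y\in\R}\Big[\tfrac1{2t}\big(y-\tfrac{dt}{2}\big)^2+W_0(y)\Big].
\]
Now the substitution $y\mapsto y+Np$ in the expression for $V_r(st,t)$, using $Np=dt$ and the $p$-periodicity $W_0(y+Np)=W_0(y)$, sends the center $+\tfrac{dt}{2}$ to $-\tfrac{dt}{2}$ while leaving $W_0$ and the range of minimization unchanged; the two minima coincide, so $D(st,t)=0$ and therefore $X(t)=st$.

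The crux of the argument is this period shift: it closes only because the minimization runs over all of $\R$ and $N=dt/p$ is an integer, so that translating by exactly $N$ periods realigns the two parabolas. I expect the main obstacle to be the first step rather than this slick cancellation, namely the rigorous identification of the physical shock $X(t)$ with the unique zero of $D$. This rests on three ingredients that must be stated carefully: the representation $V=\min\{V_l,V_r\}$, the strict monotonicity of $D(\cdot,t)$ furnished by the decay estimate \eqref{lb2}, and the branch structure $u=u_l$ for $x<X(t)$, $u=u_r$ for $x>X(t)$ supplied by Theorem \ref{thmshock}. Once these are in place, the equality $X(t)=st$ at the integer times is immediate.
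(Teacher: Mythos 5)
Your proof is correct, but it follows a genuinely different route from the paper's. The paper proves Theorem \ref{thmshock2} in a few lines by combining two ingredients: the identity \eqref{Xfor}, already established during the Green's-formula proof of Theorem \ref{thmshock}, and the Galilean invariance of Burgers' equation, $u_l(x,t)=w(x-\ovl{u}_l t,t)+\ovl{u}_l$ and $u_r(x,t)=w(x-\ovl{u}_r t,t)+\ovl{u}_r$, with $w$ the periodic zero-mean solution from \eqref{dsol}. Substituting into \eqref{Xfor} gives
\[
X(t)-st=\frac{1}{\ovl{u}_l-\ovl{u}_r}\int_{X(t)-\ovl{u}_l t}^{X(t)-\ovl{u}_r t} w(y,t)\,dy,
\]
an integral of a $p$-periodic, zero-average function over an interval of length $(\ovl{u}_l-\ovl{u}_r)t$, which vanishes exactly when $(\ovl{u}_l-\ovl{u}_r)t/p$ is an integer. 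Your period shift $y\mapsto y+Np$ realigning the two parabolas is the variational counterpart of this same cancellation: both arguments rest on the zero average of $w_0$ (periodicity of $W_0$, respectively of $w(\cdot,t)$) and on $(\ovl{u}_l-\ovl{u}_r)t\in p\mathbb{Z}$, and both are Burgers-specific (completing the square in the quadratic kernel, respectively Galilean invariance; neither extends to general convex $f$). The trade-off is this: the paper gets the identification of the shock for free, since \eqref{Xfor} is already phrased in terms of $X(t)$, and its formula is valid for every $t>T_S$, so it simultaneously exhibits the oscillation and the $O(1/t)$ decay of $X(t)-st$; you instead must prove the identification --- via $V=\min\{V_l,V_r\}$, the strict monotonicity of $D=V_l-V_r$ supplied by \eqref{lb2}, and the branch structure \eqref{glue} --- which you correctly isolate as the main burden, and which is sound (it parallels the $m_{\pm}$ machinery the paper itself sets up in Section 6 for its alternative proof of \eqref{glue}, with the half-line minima $\min_{y\le 0}F_l$, $\min_{y\ge 0}F_r$ replaced by global minima, which agree where needed because $\ovl{u}_l>\ovl{u}_r$). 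In exchange, your computation at the integer times is exact and entirely self-contained on the variational side, needing no Green's-formula argument.
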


\begin{Thm}\label{thmrare2}
	Suppose that the assumptions of Theorem \ref{thmrare} hold, and  additionally $w_0$ satisfies 
    \begin{equation*}
    \int_0^x w_0(y)~dy \geq 0, \quad 0 \leq x\leq p,
    \end{equation*}
    then \begin{equation*}
	u(x,t)=\begin{cases}
	u_l(x,t),   & \text{if~} \frac{x}{t}<f'(\ovl{u}_l),\\
	(f')^{-1}(\frac{x}{t})=u^R(x,t), & \text{if~} f'(\ovl{u}_l)\leq \frac{x}{t}\leq f'(\ovl{u}_r);\\
	u_r(x,t),   & \text{if~} \frac{x}{t}>f'(\ovl{u}_r).
	\end{cases}
    \end{equation*}
\end{Thm}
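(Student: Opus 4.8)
The plan is to prove all three regions at once by a single Lax--Oleinik variational argument, exploiting a decomposition of the initial primitive. Write $g\de(f')^{-1}$ and let $f^*$ be the Legendre transform of $f$, so that the representation underlying Proposition \ref{propgen} reads: for any $L^\infty$ datum $v_0$ with primitive $V_0(y)=\int_0^y v_0$, the entropy solution is $v(x,t)=g\big((x-y^*)/t\big)$, where $y^*$ is any minimizer of $V_0(y)+t f^*\big((x-y)/t\big)$. Thus $F,F_l,F_r$ are the variational functionals attached to $u,u_l,u_r$, and I would further introduce the functional $F^R$ for the background rarefaction $u^R$, whose datum has primitive $U_0^R(y)=\max\{\ovl{u}_l y,\ovl{u}_r y\}$.

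The first step is the key algebraic observation. Setting $U_w(y)\de\int_0^y w_0$, the hypothesis $\int_0^x w_0\ge 0$ on $[0,p]$ together with $\ovl{w}=0$ shows that $U_w$ is $p$-periodic, everywhere nonnegative, and vanishes at $y=0$. Since the datum of $u$ has primitive $U_0(y)=U_0^R(y)+U_w(y)$, I obtain the pointwise relations $F=F^R+U_w$, together with $F=F_l$ on $\{y\le 0\}$ and $F\ge F_l$ everywhere, and $F=F_r$ on $\{y\ge 0\}$ and $F\ge F_r$ everywhere. These comparisons, driven entirely by $U_w\ge 0$ and $U_w(0)=0$, are the engine of the proof.

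For the rarefaction region $f'(\ovl{u}_l)\le x/t\le f'(\ovl{u}_r)$, I would first check that $y=0$ minimizes the strictly convex functional $F^R(t,x,\cdot)$: its one-sided derivatives at $0$ are $\ovl{u}_l-g(x/t)\le 0$ and $\ovl{u}_r-g(x/t)\ge 0$ precisely in this cone. Then $F(t,x,0)=F^R(t,x,0)=\min_y F^R\le F^R+U_w=F$, so $y=0$ also minimizes $F$ and $u(x,t)=g(x/t)=u^R(x,t)$. For the left region $x/t<f'(\ovl{u}_l)$, I would isolate $G(y)\de \ovl{u}_l y+t f^*((x-y)/t)$, a strictly convex function whose unique critical point $y=x-f'(\ovl{u}_l)t$ is strictly negative; hence $G$ is increasing on $[0,\infty)$, and using $U_w\ge 0=U_w(0)$ one gets $F_l(t,x,y)=G(y)+U_w(y)>G(0)=F_l(t,x,0)$ for every $y>0$. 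Thus every minimizer $y_l^*$ of $F_l$ satisfies $y_l^*\le 0$; since $F=F_l$ there and $F\ge F_l$ globally, $y_l^*$ minimizes $F$ as well, giving $u(x,t)=u_l(x,t)$. The right region $x/t>f'(\ovl{u}_r)$ is handled by the mirror-image argument with $G_r(y)\de \ovl{u}_r y+t f^*((x-y)/t)$, whose critical point is strictly positive, forcing every minimizer of $F_r$ to lie in $\{y\ge 0\}$ and hence $u=u_r$.

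The routine verifications are the strict convexity of $G,G_r,F^R$ and the signs of their critical points. The one point demanding care is the treatment of minimizers: they may fail to be unique on the interface rays $x/t=f'(\ovl{u}_{l,r})$, so I would phrase everything as "$y=0$ (resp.\ $y_l^*\le 0$) is \emph{a} minimizer common to the two functionals," which is all that is needed to identify the solutions, noting that strict convexity yields uniqueness in the open regions. I expect the main conceptual obstacle to be entirely in the setup: recognizing that the hypothesis $\int_0^x w_0\ge 0$ is exactly the statement $U_w\ge 0$, $U_w(0)=0$, and that this single inequality simultaneously pins the rarefaction fan to the exact cone $[f'(\ovl{u}_l),f'(\ovl{u}_r)]\,t$ and glues the outer periodic solutions $u_l,u_r$ along the bounding rays, thereby upgrading the $O(1/t)$ estimate of Theorem \ref{thmrare} to an exact identity.
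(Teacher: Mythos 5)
Your proof is correct, but it takes a genuinely different route from the paper's. The paper stays entirely inside the generalized-characteristics framework of Section 3: the hypothesis $\int_0^x w_0(y)\,dy\geq 0$ enters only through the choice $a=0$ in \eqref{defa}, so that by Corollary \ref{corgamma} the divides $\Gamma_l^0(t)=f'(\ovl{u}_l)t$ and $\Gamma_r^0(t)=f'(\ovl{u}_r)t$ issue from the origin; Proposition \ref{propglue} then gives $u=u_l$ for $x<f'(\ovl{u}_l)t\leq X_{l+}(t)$ and $u=u_r$ for $x>f'(\ovl{u}_r)t\geq X_{r-}(t)$, while inside the cone the extremal backward characteristics of $u$ cannot cross either divide at positive time (the Claim in the proof of Theorem \ref{thmrare}), hence must pass through the origin, forcing $f'(u(x,t))=x/t$. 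You replace all of this by the Lax--Oleinik variational formula together with the single observation $U_w\geq 0=U_w(0)$, which is exactly the variational counterpart of ``the divides pass through the origin'' (compare Proposition \ref{propdivi}: a divide from $(\ovl{x},0)$ is the statement that $\ovl{x}$ remains a minimizer of the Lax functional for all time). What your route buys is a symmetric, purely pointwise treatment of all three regions from one comparison principle ($F=F^R+U_w$, $F\geq F_l$ and $F\geq F_r$ with equality on half-lines), with no characteristics at all; what it costs is that the variational representation for a general $C^2$ strictly convex flux is machinery the paper never sets up---Section 6 invokes Hopf's formula only for $f(u)=u^2/2$ (Proposition \ref{prophopf})---so you would need to cite the Lax-function theory for general convex $f$ (e.g.\ Dafermos, Chapter 11) and handle the convention $f^*=+\infty$ outside the closure of the range of $f'$ when $f$ is not superlinear. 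Two steps you should make explicit rather than leave implicit: (i) to conclude $u=u_l$ (both one-sided limits) left of the fan, note that $\min F=\min F_l$ there and that the minimizer \emph{sets} of $F$ and $F_l$ coincide---every minimizer of $F_l$ lies in $\{y\leq 0\}$ where $F=F_l$, and conversely any minimizer $y^*$ of $F$ satisfies $F_l(y^*)\leq F(y^*)=\min F=\min F_l$---so the extremal minimizers, and with them $u(x\pm,t)$ and $u_l(x\pm,t)$, agree; (ii) in the fan, $y=0$ is in fact the \emph{unique} minimizer of $F$, since $F^R(y)>F^R(0)$ strictly for $y\neq 0$ (strict convexity of $F^R$ restricted to each half-line, with the one-sided slope signs you computed) and $U_w\geq 0$; this strictness is what upgrades ``$y=0$ is a minimizer'' to the exact identity $u(x,t)=(f')^{-1}(x/t)$, and it also shows your caution about the interface rays is unnecessary, as uniqueness holds there too.
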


\vspace{0.5cm}

This paper proceeds as follows: In Section 3, we present some well-known results on generalized characteristics, especially the divides, which can be found in Dafermos's book \cite{Dafe}, and we also obtain some propositions that will be frequently used; Theorem \ref{thmshock}-\ref{thmrare2} are proved in Section 4 and 5; in Section 6, for the special case $f(u)=\frac{u^2}{2}, $ i.e. Burger's equation, we give another proof inspired by the Hopf-Cole transform, to prove Theorem \ref{thmshock}; and Theorem \ref{thmper} and its proof are shown in Appendix A.

\vspace{0.8cm}
\section{Preliminary: generalized characteristics}
Here we list some well-known results on generalized characteristics, which can be found in Chapter 10 and Chapter 11 in \cite{Dafe}.

\begin{Def}\label{defgen}
	A generalized characteristic for \eqref{equ1}, associated with the entropy solution $u(x,t)$, on the time interval $[\sigma, \tau] \subset [0,+\infty)$, is a Lipschitz function \\ $\xi: [\sigma, \tau] \longrightarrow (-\infty, +\infty)$ which satisfies the differential inclusion 
	\begin{equation*}
		\xi'(t) \in [f'(u(\xi(t)+,t)),f'(u(\xi(t)-,t))], \quad \text{ a.e. on} \quad [\sigma, \tau]
	\end{equation*}
\end{Def}

\begin{Prop}\label{propgen}
	Assume $u(x,t) $ is the entropy solution to \eqref{equ1} with $L^{\infty} $ intial data $u_0$, then through any point $(\ovl{x},\ovl{t}) \in (-\infty,+\infty)\times [0,+\infty) $ pass two extremal generalized characteristics(which may not be distinct) defined on $[0,+\infty)$, namely the minimal $\xi_-(t)$ and the maximal $\xi_+(t)$ with $\xi_-(t)\leq \xi_+(t)$ for $t \in [0,+\infty)  $.
	And for any generalized characteristic $\xi(t)$ passing through $(\ovl{x},\ovl{t})$, there holds $\xi_-(t) \leq \xi(t) \leq \xi_+(t), \forall~t\geq 0$.

	Furthermore, if $\ovl{t}>0, $ then the minimal backward (confined in $0\leq t\leq \ovl{t}$) characteristic $\xi_-(t)$ and maximal backward characteristic $\xi_+(t)$ are both straight lines, and satisfies for $0<t<\ovl{t}$,
	\begin{equation}\label{genpro}
		\begin{aligned}
			& u_0(\xi_-(0)-)\leq u(\xi_-(t)-,t)=u(\xi_-(t)+,t)=u(\ovl{x}-,\ovl{t}) \leq u_0(\xi_-(0)+);\\
			& u_0(\xi_+(0)-)\leq u(\xi_+(t)-,t)=u(\xi_+(t)+,t)=u(\ovl{x}+,\ovl{t})\leq u_0(\xi_+(0)+);			
		\end{aligned}
	\end{equation}
	and the forward (confined in $t\geq \ovl{t}$) characteristic is unique, i.e. for $t \geq \ovl{t},$
	\begin{equation}\label{foruni}
		\xi_-(t)=\xi_+(t) \de \xi(t).
	\end{equation}
	See Figure \ref{gc1} and Figure \ref{gc2}.
\end{Prop}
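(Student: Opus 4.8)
The plan is to follow Dafermos's theory of generalized characteristics, relying crucially on the genuine nonlinearity $f''>0$ together with the Oleinik entropy condition, which for convex flux yields the one-sided Lipschitz bound $f'(u(x,t)) - f'(u(y,t)) \leq (x-y)/t$ for $x>y$. The argument splits naturally into four parts: existence of the extremal characteristics, the comparison bounds, the backward straight-line and trace properties, and forward uniqueness.

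First I would establish existence and extremality. The set-valued map $(x,t) \mapsto [f'(u(x+,t)), f'(u(x-,t))]$ has nonempty compact convex values and is upper semicontinuous, using the one-sided regularity of entropy solutions (left and right traces exist everywhere). Hence by Filippov's existence theorem the differential inclusion of Definition \ref{defgen} admits Lipschitz solutions on $[0,+\infty)$ through $(\ovl{x},\ovl{t})$, so the family of generalized characteristics through the point is nonempty. To produce $\xi_-$ and $\xi_+$, I would take the pointwise infimum and supremum over this family and verify, via Arzel\`a--Ascoli (all characteristics share a common Lipschitz constant $\sup|f'|$ on the relevant range of $u$) together with the closedness of the solution set of the inclusion, that the lower and upper envelopes are themselves generalized characteristics. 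The inequalities $\xi_-(t)\leq\xi(t)\leq\xi_+(t)$ and $\xi_-\leq\xi_+$ are then immediate from this construction.

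Next I would treat the backward part $0\leq t\leq\ovl{t}$. The claim is that $\xi_-$ is a straight line carrying a constant value of $u$. The key step is to show that the minimal backward characteristic cannot coincide with a shock on a set of positive measure: along a shock the Rankine--Hugoniot speed is strictly between the two classical characteristic speeds, so by genuine nonlinearity one could follow the slower incoming classical characteristic and obtain a strictly smaller characteristic, contradicting minimality. Thus $u$ is continuous and constant along $\xi_-$, forcing $\xi_-'(t)=f'(u(\xi_-(t),t))$ to be constant, i.e. $\xi_-$ is a line. The trace identities in \eqref{genpro} then follow: the equalities $u(\xi_-(t)-,t)=u(\xi_-(t)+,t)=u(\ovl{x}-,\ovl{t})$ express that the left trace at the terminal point is transported unchanged down the line, and the bounds by $u_0(\xi_-(0)\pm)$ come from the entropy condition read at the initial time. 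The statement for $\xi_+$ is entirely symmetric, tracking the right trace instead.

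Finally, for the forward part $t\geq\ovl{t}$ I would prove $\xi_-(t)=\xi_+(t)$ by contradiction. If the two forward characteristics separated on some interval, the enclosed open region would have to be foliated by classical characteristics all issuing from the single point $(\ovl{x},\ovl{t})$, which is a forward-opening rarefaction fan from an interior point; this violates the one-sided Lipschitz (Oleinik) estimate, since along vertical slices the jump in $f'(u)$ across the fan would grow like the fan width while the estimate forces it to be controlled by $(x-y)/t$. Hence the forward extremal characteristics coincide. I expect the technical heart, and the main obstacle, to be the rigorous justification that the backward extremal characteristics are genuine straight lines with the precise trace identities of \eqref{genpro}, as this is where the interplay between minimality/maximality, genuine nonlinearity, and the entropy condition is most delicate; forward uniqueness then falls out of the same one-sided Lipschitz circle of ideas.
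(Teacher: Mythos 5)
The paper itself offers no proof of Proposition \ref{propgen}: it is stated as a compilation of known results, and the section explicitly refers the reader to Chapters 10 and 11 of \cite{Dafe}, so there is no in-paper argument to compare against. Your proposal reconstructs precisely that reference's route --- Filippov theory for the differential inclusion of Definition \ref{defgen} plus extremal envelopes for existence, shock-freeness of extremal backward characteristics for the straight-line property and the traces \eqref{genpro}, and the Oleinik one-sided Lipschitz estimate to rule out a rarefaction fan opening at positive time for the forward uniqueness \eqref{foruni}. In outline this is exactly the argument the paper implicitly relies on, and the architecture is sound.

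One step as written is wrong, though easily repaired. You claim that if the minimal backward characteristic rode a shock, one could ``follow the slower incoming classical characteristic and obtain a strictly smaller characteristic, contradicting minimality.'' Going backward in time from $(\ovl{x},\ovl{t})$ one has $\xi(t)=\ovl{x}-\int_t^{\ovl{t}}\xi'(\tau)\,d\tau$, so a \emph{larger} speed yields a position farther to the \emph{left}. Along a shock the entropy condition gives $u_->u_+$, hence $f'(u_+)<s<f'(u_-)$ by strict convexity; the deviation that lies strictly to the left of the shock, contradicting minimality of $\xi_-$, is therefore the one following the faster left-state speed $f'(u_-)$, whereas following the slower speed $f'(u_+)$ produces a backward path lying to the right, which contradicts nothing for $\xi_-$ (it is the relevant deviation for the \emph{maximal} characteristic $\xi_+$). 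A second, smaller gloss: in the forward-uniqueness step you assert that the funnel between two separated forward characteristics is foliated by straight characteristics issuing from $(\ovl{x},\ovl{t})$; this requires the confinement property that extremal backward characteristics from points inside the funnel cannot cross $\xi_\pm$ and hence converge to $(\ovl{x},\ovl{t})$, a crossing lemma your sketch takes for granted (it is proved in \cite{Dafe}). With these repairs your proposal matches the proof in the reference the paper cites.
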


\begin{figure}[h]
	\includegraphics[scale=0.5]{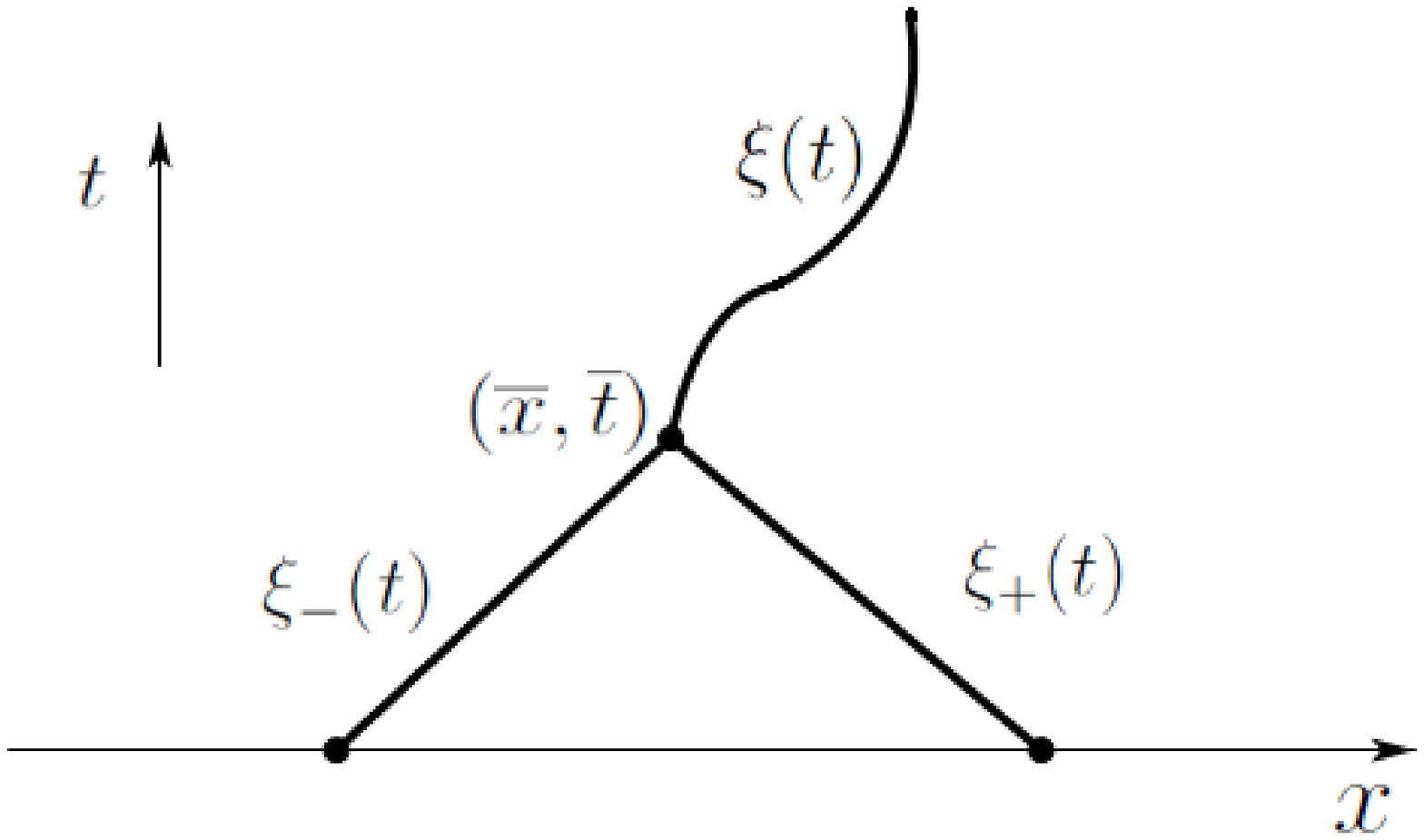} 
	\caption{}
	\label{gc1}
\end{figure}

\begin{figure}[h]
	\includegraphics[scale=0.5]{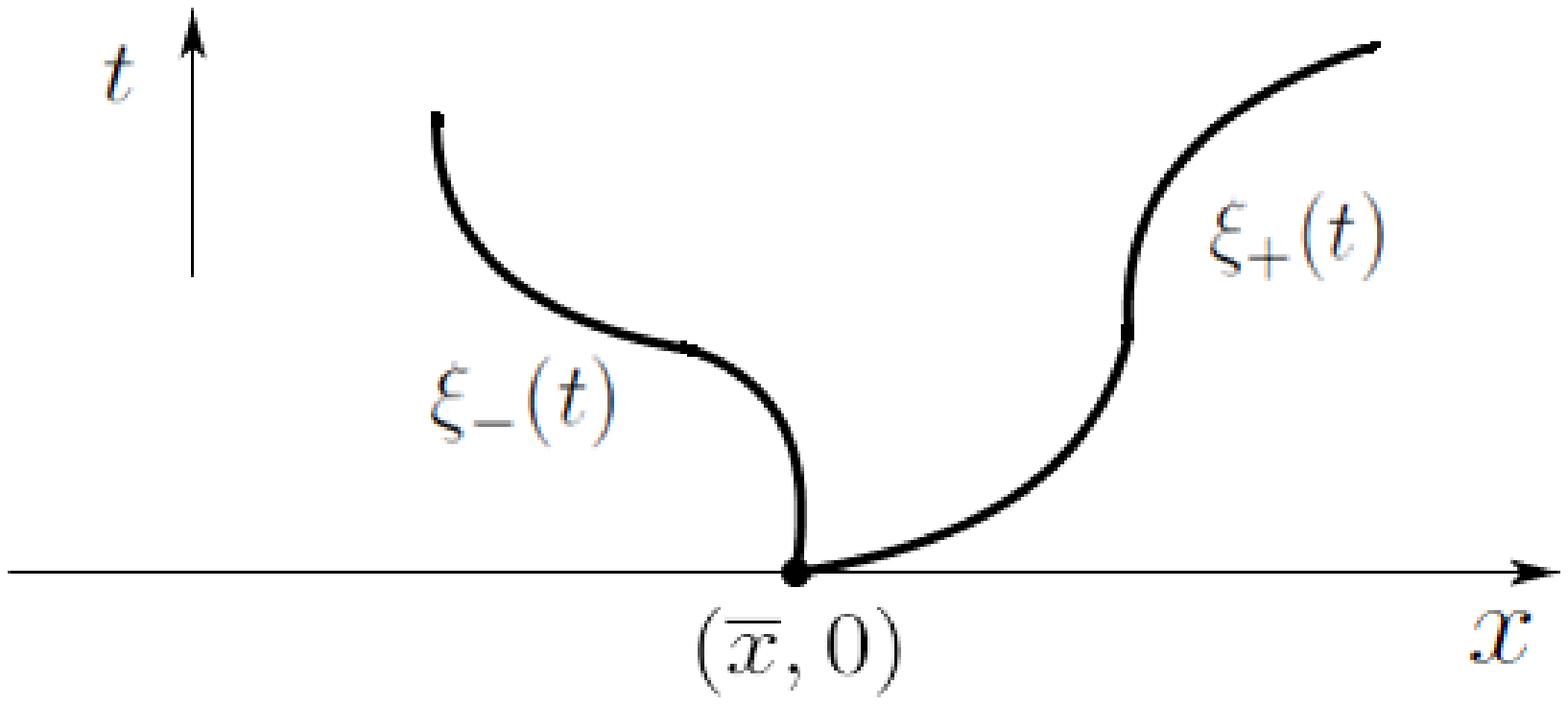} 
	\caption{}
	\label{gc2}
\end{figure}

\begin{Rem}
\begin{enumerate}
	\item For $\ovl{t}>0, $ the minimal backward characteristic $\xi_-(t)$ and the maximal backward characteristic $\xi_+(t)$ coincide if and only if $u(\ovl{x}-,\ovl{t})=u(\ovl{x}+,\ovl{t})$. 
    \item For any two extremal forward generalized characteristic $\xi_-(t)$ and $\xi_+(t)$ issuing from $x-$axis, if they coincide at some time $t_0>0,$ then they remain the same for all $t>t_0. $ 
\end{enumerate}
\end{Rem}

The following useful integral formula, \eqref{tri}, can be found in \cite{Dafe}.

\begin{Lem}\label{lemgre}
	Let $\xi(t)$ and $\widetilde{\xi}(t)$ be two extremal backward characteristics corresponding to entropy solutions $u(x,t)$ and $\widetilde{u}(x,t)$ to \eqref{equ1} with $L^{\infty}$ initial data $u(x,0)$ and $ \widetilde{u}(x,0) $ respectively,  emanating from a fixed point $(\ovl{x},\ovl{t}) \in (-\infty,+\infty)\times (0,+\infty)$, see Figure \ref{triFi}. Then if $ ~\widetilde{\xi}(0) <\xi(0) $, it holds that
	\begin{align}
	& \int_0^{\ovl{t}} \{ f(b)-f(\widetilde{u}(\xi(t)-,t)) - f'(b)[b- \widetilde{u}(\xi(t)-,t)] \} ~dt \notag \\
	& + \int_0^{\ovl{t}} \{ f(\widetilde{b})-f(u(\widetilde{\xi}(t)+,t)) - f'(\widetilde{b})[\widetilde{b}-u(\widetilde{\xi}(t)+,t)] \} ~dt \label{tri}\\
	& \quad \quad \quad \quad \quad =\int_{\widetilde{\xi}(0)}^{\xi(0)} [u(x,0)-\widetilde{u}(x,0)]~dx. \notag
	\end{align}
	where $b $ and $\widetilde{b}$ are constant defined by
	\begin{align*}
	b \de
	\begin{cases}
	u(\ovl{x}-,\ovl{t}), \quad \text{if~ } \xi(t) ~\text{is minimal;}\\
	u(\ovl{x}+,\ovl{t}), \quad \text{if~ } \xi(t) ~\text{is maximal.}
	\end{cases}\\
	\widetilde{b} \de
	\begin{cases}
	\widetilde{u}(\ovl{x}-,\ovl{t}), \quad \text{if~ } \widetilde{\xi}(t) ~\text{is minimal;}\\
	\widetilde{u}(\ovl{x}+,\ovl{t}), \quad \text{if~ } \widetilde{\xi}(t) ~\text{is maximal.}
	\end{cases}
	\end{align*}
\end{Lem}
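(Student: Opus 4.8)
The plan is to derive \eqref{tri} from the weak (integral) formulation of \eqref{equ1}, applied to the triangular region $\Omega$ bounded below by the segment $\{(x,0):\widetilde{\xi}(0)\le x\le \xi(0)\}$ on the $x$-axis, on the right by the curve $\xi(t)$, and on the left by the curve $\widetilde{\xi}(t)$, the two curves meeting at the apex $(\ovl{x},\ovl{t})$ (the region shown in Figure \ref{triFi}). Since $\widetilde{\xi}(0)<\xi(0)$ and, by Proposition \ref{propgen}, both extremal backward characteristics are straight lines through $(\ovl{x},\ovl{t})$, two distinct such lines meet only at their common apex, so $\widetilde{\xi}(t)<\xi(t)$ for $0\le t<\ovl{t}$ and $\Omega$ is a genuine (nondegenerate) triangle with left edge $\widetilde{\xi}$ and right edge $\xi$.

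First I would apply the Gauss--Green (divergence) theorem to each of the fields $(f(u),u)$ and $(f(\widetilde{u}),\widetilde{u})$ over $\Omega$. Because $u$ and $\widetilde{u}$ are weak solutions of \eqref{equ1}, both fields are divergence-free in $\mathcal{D}'$, so the line integral of $u\,dx-f(u)\,dt$ (resp. $\widetilde{u}\,dx-f(\widetilde{u})\,dt$) around $\p\Omega$, traversed counterclockwise, vanishes. Splitting $\p\Omega$ into its three edges and using on each slanted edge the \emph{inner} (region-side) trace --- the left trace on the right edge $\xi$ and the right trace on the left edge $\widetilde{\xi}$, the latter traversed downward --- gives, for $u$,
\begin{align*}
0 = {} & \int_{\widetilde{\xi}(0)}^{\xi(0)} u(x,0)\,dx + \int_0^{\ovl{t}} \big[ u(\xi(t)-,t)\,\xi'(t) - f(u(\xi(t)-,t)) \big]\,dt \\
& - \int_0^{\ovl{t}} \big[ u(\widetilde{\xi}(t)+,t)\,\widetilde{\xi}'(t) - f(u(\widetilde{\xi}(t)+,t)) \big]\,dt,
\end{align*}
and the entirely analogous identity for $\widetilde{u}$. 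I would then subtract the $\widetilde{u}$-identity from the $u$-identity.

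The third step is to insert the characteristic relations from Proposition \ref{propgen}. Along $\xi$ one has $u(\xi(t)\pm,t)=b$ and $\xi'(t)=f'(b)$, while along $\widetilde{\xi}$ one has $\widetilde{u}(\widetilde{\xi}(t)\pm,t)=\widetilde{b}$ and $\widetilde{\xi}'(t)=f'(\widetilde{b})$, with $b,\widetilde{b}$ exactly as in the statement. Substituting $\xi'(t)=f'(b)$ turns the subtracted $\xi$-integrand into $-\{ f(b) - f(\widetilde{u}(\xi(t)-,t)) - f'(b)[b - \widetilde{u}(\xi(t)-,t)] \}$, and substituting $\widetilde{\xi}'(t)=f'(\widetilde{b})$ turns the subtracted $\widetilde{\xi}$-integrand into $-\{ f(\widetilde{b}) - f(u(\widetilde{\xi}(t)+,t)) - f'(\widetilde{b})[\widetilde{b} - u(\widetilde{\xi}(t)+,t)] \}$; moving the $t=0$ term to the right-hand side then yields precisely \eqref{tri}.

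The main obstacle is justifying the Gauss--Green step for these merely bounded weak solutions, which may carry shocks crossing the edges of $\Omega$ and possess only one-sided traces along $\xi$ and $\widetilde{\xi}$. This is resolved by recalling that entropy solutions of the genuinely nonlinear law \eqref{equ1} are locally of bounded variation for $t>0$, so $(f(u),u)$ is a $BV$ (hence divergence-measure) field whose normal trace on the Lipschitz boundary $\p\Omega$ is well defined and for which the Gauss--Green formula holds; the interior shock structure is irrelevant because the divergence vanishes as a measure. A secondary point demanding care is the bookkeeping of which one-sided trace is the inner one on each slanted edge and the sign produced by descending the left edge --- both of which are pinned down once the counterclockwise orientation of $\p\Omega$ is fixed.
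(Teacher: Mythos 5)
Your proposal is correct and follows essentially the same route as the paper: Green's formula over the triangle with vertices $(\widetilde{\xi}(0),0)$, $(\xi(0),0)$, $(\ovl{x},\ovl{t})$, combined with the constancy $u\equiv b$ along $\xi$ and $\widetilde{u}\equiv\widetilde{b}$ along $\widetilde{\xi}$ from Proposition \ref{propgen}; applying the divergence theorem to the two fields separately and subtracting is the same computation as the paper's integration of $\p_t(u-\widetilde{u})+\p_x(f(u)-f(\widetilde{u}))=0$. Your added justification of the Gauss--Green step via $BV$ regularity and inner traces is exactly the detail the paper delegates to Dafermos's book.
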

\begin{figure}[h]
	\includegraphics[scale=0.5]{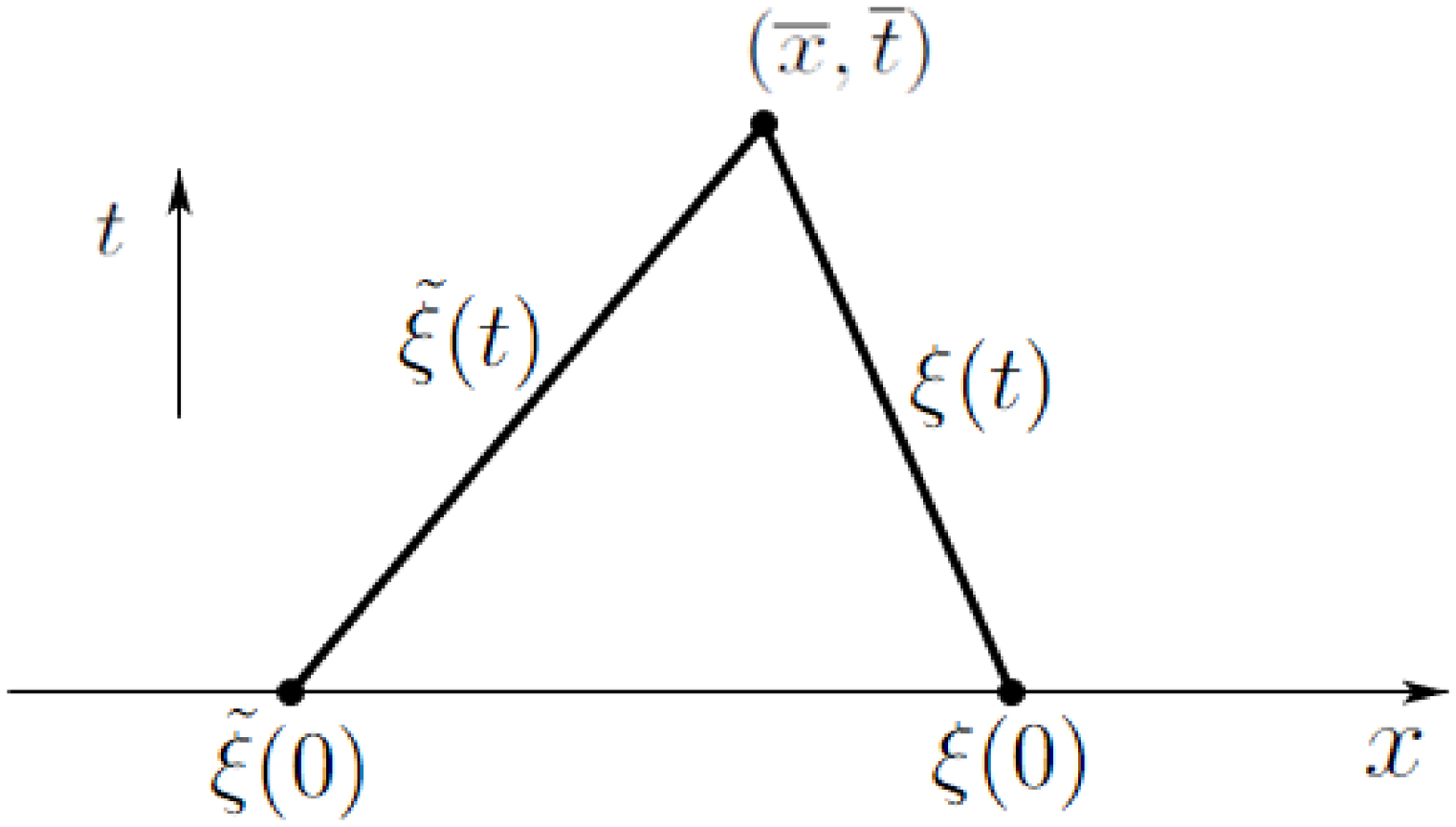} 
	\caption{}
	\label{triFi}
\end{figure}
\begin{proof}[Proof of Lemma \ref{lemgre}]
	By Proposition \ref{propgen}, it holds that $u(\xi(t),t)\equiv b $ and $\widetilde{u}(\widetilde{\xi}(t),t) \equiv \widetilde{b}$, for $0<t<\ovl{t}$. Then integrating the equation $$ \p_t (u-\widetilde{u}) +\p_x(f(u)-f(\widetilde{u}))=0 $$ in the triangle with vertex $(\ovl{x},\ovl{t}), (\widetilde{\xi}(0),0), (\xi(0),0)$, and using Green's formula, one can get easily \eqref{tri}, for details, see \cite{Dafe}.
\end{proof}

\begin{Def}[ \cite{Dafe} Definition~10.3.3]\label{defdivi}
	A minimal (or maximal) divide, associated with the solution u, is a Lipschitz function $ \xi(t): [0, +\infty] \rightarrow \R $ such that $ \xi(t) = \lim_{m\rightarrow \infty} \xi_m(t), $ uniformly on compact time intervals, where $ \xi_m(\cdot) $ is the minimal (or maximal) backward characteristic emanating from a point $ (x_m, t_m), $ with $ t_m \rightarrow +\infty, $ as $ m \rightarrow \infty. $ 
\end{Def}

\begin{Prop}[ \cite{Dafe} Theorem~11.4.1]\label{propdivi}
	Considering the Cauchy problem for \eqref{equ1} with any $L^{\infty}$ initial data $u_0(x). $ If there exists a constant $\ovl{u} $ and a point $\ovl{x} \in \R $, s.t. 
	\begin{equation}\label{divi}
		\int_{\ovl{x}}^x [u_0(y)- \ovl{u}]~dy \geq 0, \quad -\infty <x< \infty,
	\end{equation}
	then there exists a divide associated with $ u $, issuing from the point $(\ovl{x},0)$ of the $x$-axis, on which the entropy solution $u $ is constant $\ovl{u}.$
\end{Prop}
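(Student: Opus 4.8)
The plan is to produce the divide explicitly as the straight ray
$\chi(t)\de \ovl{x}+f'(\ovl{u})\,t$, $t\ge 0$, and to show $u\equiv\ovl{u}$ along it by comparing $u$ with the constant entropy solution $\widetilde u\equiv\ovl{u}$ through the identity \eqref{tri}. First I would recast the hypothesis \eqref{divi} as the statement that the primitive $U(x)\de\int_{\ovl{x}}^{x}[u_0(y)-\ovl{u}]\,dy$ is nonnegative everywhere and vanishes at $x=\ovl{x}$; this is the only place the sign condition enters. The slope $f'(\ovl{u})$ of $\chi$ is forced, since any characteristic carrying the value $\ovl{u}$ travels at speed $f'(\ovl{u})$; thus $\chi$ is precisely the backward characteristic of $\widetilde u$ issuing from any point $P_\tau\de(\chi(\tau),\tau)$ back to $(\ovl{x},0)$.

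The heart of the argument is a comparison at the points $P_\tau$ as $\tau\to+\infty$. Fix $\tau$ and let $\xi$ be the minimal backward characteristic of $u$ from $P_\tau$, which is a straight line by Proposition \ref{propgen}. I would apply Lemma \ref{lemgre} to the pair $\xi$ and $\chi$ emanating from $P_\tau$, taking them in whichever order puts the smaller intercept in the role of $\widetilde\xi(0)$. Strict convexity of $f$ gives, for all $b,c$, that $f(b)-f(c)-f'(b)(b-c)\le 0$ with equality iff $b=c$; since one of the two solutions in each integrand is the constant $\ovl{u}$, both integrals on the left of \eqref{tri} are nonpositive, so the left-hand side is $\le 0$. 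On the other hand, in either order the right-hand side of \eqref{tri} works out to $U(\xi(0))$, which is $\ge 0$ by \eqref{divi}. Hence both sides vanish, and the equality case of strict convexity forces $u(\chi(t)-,t)=\ovl{u}$ for a.e.\ $t\in[0,\tau]$; running the same comparison with the maximal backward characteristic of $u$ yields $u(\chi(t)+,t)=\ovl{u}$ as well. Letting $\tau\to+\infty$ gives $u\equiv\ovl{u}$ on all of $\chi$.

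Once $u\equiv\ovl{u}$ on $\chi$, the minimal backward characteristic of $u$ from each point $(\chi(\tau),\tau)$ is the line of slope $f'(\ovl{u})$ through that point, i.e.\ $\chi$ itself; choosing $\tau_m\to+\infty$ among the times where the trace equals $\ovl{u}$, Definition \ref{defdivi} identifies $\chi$ as a divide issuing from $(\ovl{x},0)$ on which $u\equiv\ovl{u}$. I expect the main obstacle to be the bookkeeping in the comparison step: correctly orienting $\xi$ and $\chi$ (deciding on which side of $\ovl{x}$ the backward characteristic of $u$ lands) so that \eqref{divi} delivers the right sign for the right-hand side of \eqref{tri}, and then cleanly extracting the equality case to pin \emph{both} one-sided traces of $u$ along $\chi$ to $\ovl{u}$. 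The degenerate situation $\xi(0)=\ovl{x}$, in which Lemma \ref{lemgre} does not apply, must be treated separately but is immediate, since then $\xi=\chi$ already carries the value $\ovl{u}$.
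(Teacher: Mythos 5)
The paper offers no proof of Proposition \ref{propdivi}: it is quoted from \cite{Dafe} (Theorem 11.4.1). Your argument is correct, and it is in substance the argument of the cited source, reconstructed from the one tool this paper does supply, the comparison identity \eqref{tri}. The bookkeeping you were worried about does close up: with $\widetilde u\equiv\ovl{u}$, whose backward characteristic through $P_\tau=(\ovl{x}+f'(\ovl{u})\tau,\tau)$ is the ray $\chi$ itself (unique, hence extremal, for a constant solution), Lemma \ref{lemgre} applies in whichever labelling makes the two intercepts strictly ordered, and in \emph{both} labellings the right-hand side of \eqref{tri} equals $\int_{\ovl{x}}^{\xi(0)}[u_0(y)-\ovl{u}]\,dy\ge 0$ (flipping the limits also flips the sign of the integrand), while strict convexity of $f$ makes both left-hand integrals nonpositive, so everything vanishes. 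The cleanest extraction of the equality case is at the apex: the integrand containing the apex trace of $u$ (the constant $b$ or $\widetilde b$, whichever belongs to $u$) is constant in $t$, because the other solution entering it is identically $\ovl{u}$; hence its vanishing forces that trace to equal $\ovl{u}$ exactly, not merely a.e. Doing this once with the minimal and once with the maximal backward characteristic of $u$ gives $u(\chi(\tau)-,\tau)=u(\chi(\tau)+,\tau)=\ovl{u}$ for every $\tau>0$, which is already the statement that $u\equiv\ovl{u}$ along $\chi$; no limit in $\tau$ is needed for that part. Your degenerate case $\xi(0)=\ovl{x}$ is indeed immediate (two straight lines through the same two points coincide, and strict monotonicity of $f'$ converts equality of slopes into equality of values), and the identification of $\chi$ as a divide via Definition \ref{defdivi} is as you say, since the minimal backward characteristics from $(\chi(\tau_m),\tau_m)$, $\tau_m\to\infty$, are exactly $\chi$ restricted to $[0,\tau_m]$.
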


\vspace{0.5cm}
In the following of this section, we give some conclusions derived from the knowledge above, which will be frequently used in this paper.

\begin{Prop}\label{propdiper}	
	Assume that the initial data $u_0(x) \in L^{\infty}$ is periodic of period $p$ with the average $\ovl{u}=\frac{1}{p}\int_0^p u_0(x)~dx$. Then for each integer $N \in \mathbb{Z}$, the straight line
	\begin{equation}\label{diper}
		x=f'(\ovl{u})t+\ovl{x}+Np
	\end{equation}
	is a divide associated with the entropy solution $u(x,t)$ to \eqref{equ1}.\\
	Here $\ovl{x} $ is defined as some point in $[0,p)$, satisfying
	\begin{equation}\label{minpoint}
		\int_0^{\ovl{x}} [u_0(y)-\ovl{u}] dy = \min_{x \in [0,p]} \int_0^x [u_0(y)-\ovl{u}] dy.
    \end{equation}
\end{Prop}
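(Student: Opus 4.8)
The plan is to deduce this from Dafermos's existence theorem for divides, namely Proposition \ref{propdivi}, applied at each of the base points $\ovl{x}+Np$, and then to pin down the resulting divide as a straight line by exploiting the fact that it carries the constant value $\ovl{u}$. The periodicity of $u_0$ is what lets a single minimality condition produce infinitely many divides.

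First I would record the cumulative function $W(x)\de\int_0^x[u_0(y)-\ovl{u}]\,dy$. Since $u_0$ is $p$-periodic with average $\ovl{u}$, one full period contributes nothing, so
\[
W(x+p)-W(x)=\int_x^{x+p}[u_0(y)-\ovl{u}]\,dy=\int_0^p[u_0(y)-\ovl{u}]\,dy=0,
\]
i.e. $W$ is itself $p$-periodic. By \eqref{minpoint} the point $\ovl{x}$ minimizes $W$ over one period, hence by periodicity $W$ attains its \emph{global} minimum over all of $\R$ at each point $\ovl{x}+Np$, $N\in\mathbb{Z}$. Consequently, for every fixed $N$,
\[
\int_{\ovl{x}+Np}^{x}[u_0(y)-\ovl{u}]\,dy=W(x)-W(\ovl{x}+Np)\geq 0,\qquad -\infty<x<\infty,
\]
which is precisely hypothesis \eqref{divi} of Proposition \ref{propdivi} with base point $\ovl{x}+Np$.

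Next, applying Proposition \ref{propdivi} once for each $N$, I obtain a divide $\xi_N(t)$ associated with $u$, issuing from $(\ovl{x}+Np,0)$, along which the entropy solution is identically $\ovl{u}$, that is $u(\xi_N(t)\pm,t)=\ovl{u}$ for $t>0$. It then remains to identify $\xi_N$ with the straight line \eqref{diper}. Since a divide is, by Definition \ref{defdivi}, a locally uniform limit of the (straight) backward characteristics of Proposition \ref{propgen} and is therefore itself a generalized characteristic, it obeys the differential inclusion of Definition \ref{defgen},
\[
\xi_N'(t)\in\big[\,f'(u(\xi_N(t)+,t)),\,f'(u(\xi_N(t)-,t))\,\big]\quad\text{a.e.}
\]
Because $u=\ovl{u}$ on both sides of $\xi_N$, this interval collapses to the single point $f'(\ovl{u})$, forcing $\xi_N'(t)=f'(\ovl{u})$ a.e.; integrating with $\xi_N(0)=\ovl{x}+Np$ yields $\xi_N(t)=f'(\ovl{u})\,t+\ovl{x}+Np$, which is exactly \eqref{diper}.

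The main obstacle, and the only point beyond bookkeeping, is this final identification: one must justify that the divide furnished by Proposition \ref{propdivi} is a genuine generalized characteristic, so that the differential inclusion of Definition \ref{defgen} applies and the constancy $u\equiv\ovl{u}$ pins its slope to $f'(\ovl{u})$. Here the structure of divides as limits of the straight backward characteristics of Proposition \ref{propgen}, together with the constancy assertion of Proposition \ref{propdivi}, must be combined carefully; by contrast, the periodicity reduction that disposes of all $N$ simultaneously is entirely routine. Note also that this argument needs only the $p$-periodicity of $W$ (hence of $u_0$), not periodicity of the solution itself, which keeps the treatment of every translate $\ovl{x}+Np$ uniform.
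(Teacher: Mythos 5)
Your proof is correct and follows essentially the same route as the paper: the primitive $W(x)=\int_0^x[u_0(y)-\ovl{u}]\,dy$ is $p$-periodic, so the minimizer $\ovl{x}$ gives a global minimum at every $\ovl{x}+Np$, hypothesis \eqref{divi} holds at each such base point, and Proposition \ref{propdivi} yields the divides. The only difference is that you explicitly justify identifying each divide with the straight line of slope $f'(\ovl{u})$ (via the differential inclusion, since $u\equiv\ovl{u}$ along the divide), a step the paper leaves implicit in its citation of Proposition \ref{propdivi}; this is a legitimate and careful filling-in rather than a different approach.
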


\begin{proof}[Proof of Proposition \ref{propdiper}]
	The function $ \int_0^x [u_0(y)-\ovl{u}]dy $ is continuous and it's easy to verify that this integral is periodic with period $p$ due to the conservation form of the equation \eqref{equ1}. Then there exists a point $\ovl{x} \in [0,p), $ s.t. \eqref{minpoint} holds. And combining with $ \int_0^p [u_0(y)-\ovl{u}]dy =0 $, it's easy to verify that $\int_{\ovl{x}}^x [u_0(y)-\ovl{u}]~dy \geq 0, ~ -\infty <x< \infty.$
	
	Then  for any $N \in \mathbb{Z}, $
	\begin{equation}\label{geq0}
		\int_{\ovl{x}+Np}^x [u_0(y)-\ovl{u}] ~dy=\int_{\ovl{x}}^x [u_0(y)-\ovl{u}] ~dy \geq 0, \quad -\infty <x< \infty.
	\end{equation}
	So by Proposition \ref{propdivi} and \eqref{geq0}, \eqref{diper} is a divide for $u(x,t)$.
\end{proof}

For the periodic perturbation $ w_0(x), $ where $w_0$ satisfies \eqref{ave0}, one can choose a point $ a \in [0,p), $ such that 
\begin{equation}\label{defa}
	\int_0^a [w_0(y)-\ovl{w}] dy = \min_{x \in [0,p]} \int_0^x [w_0(y)-\ovl{w}] dy.
\end{equation}

Then by Proposition \ref{propdiper} and \eqref{defa}, it's easy to verify
\begin{Cor}\label{corgamma}
	For the entropy solution $ u_l(x,t)~ (resp.~ u_r(x,t))$ to \eqref{equ1} with initial data $u_l(x,0)=\ovl{u}_l+w_0(x)~ (resp. ~u_r(x,0)=\ovl{u}_r+w_0(x))$, and each $N \in \mathbb{Z}, $ the straight lines
	\begin{equation}\label{dilr}
		 x=\Gamma_l^N(t)\de a+Np+f'(\ovl{u}_l)t,~\Big( resp. ~x=\Gamma_r^N(t)\de a+Np+f'(\ovl{u}_r)t \Big) 
	\end{equation}
	are divides associated with $u_l$ (resp. $u_r$) on which $u_l(x,t)\equiv \ovl{u}_l~ ( resp. ~u_r(x,t)\equiv \ovl{u}_r ).$ 
\end{Cor}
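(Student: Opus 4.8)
The plan is to apply Proposition \ref{propdiper} separately to the two entropy solutions $u_l$ and $u_r$, and then to verify that the minimizing point $\ovl{x}$ furnished by that proposition coincides in both cases with the point $a$ defined in \eqref{defa}. First I would observe that the initial datum $u_l(x,0)=\ovl{u}_l+w_0(x)$ is periodic of period $p$, and since $w_0$ has zero average by \eqref{ave0}, its average over a period is exactly $\ovl{u}_l$; likewise $u_r(x,0)=\ovl{u}_r+w_0(x)$ is periodic with average $\ovl{u}_r$. Hence Proposition \ref{propdiper} is directly applicable to each, with $\ovl{u}=\ovl{u}_l$ and $\ovl{u}=\ovl{u}_r$ respectively.

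The key step is the identification of the minimizing point. For $u_l$, the point $\ovl{x}\in[0,p)$ required by \eqref{minpoint} must satisfy
\begin{equation*}
\int_0^{\ovl{x}}[\,\ovl{u}_l+w_0(y)-\ovl{u}_l\,]\,dy=\int_0^{\ovl{x}}w_0(y)\,dy=\min_{x\in[0,p]}\int_0^x w_0(y)\,dy,
\end{equation*}
where the constant $\ovl{u}_l$ cancels in the integrand. Because $\ovl{w}=0$, the right-hand side is precisely the quantity whose minimizer defines $a$ in \eqref{defa}, so one may take $\ovl{x}=a$. The same cancellation occurs for $u_r$ — the datum differs only by the constant $\ovl{u}_r$, which again drops out — so the minimizing point is once more $a$. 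Substituting $\ovl{x}=a$, $\ovl{u}=\ovl{u}_l$ (resp. $\ovl{u}=\ovl{u}_r$) into the divide \eqref{diper} yields exactly the lines $x=\Gamma_l^N(t)=a+Np+f'(\ovl{u}_l)t$ (resp. $x=\Gamma_r^N(t)=a+Np+f'(\ovl{u}_r)t$), on which Proposition \ref{propdiper} guarantees the solution equals the constant $\ovl{u}_l$ (resp. $\ovl{u}_r$).

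There is essentially no serious obstacle here: the entire content is the cancellation of the constant average in the defining integral \eqref{minpoint}, which makes the minimizing point independent of whether one perturbs $\ovl{u}_l$ or $\ovl{u}_r$. The only point requiring a moment's care is the use of the normalization $\ovl{w}=0$ from \eqref{ave0}, which ensures that the minimizer in \eqref{minpoint} reduces to the one defining $a$ in \eqref{defa} without an extra linear correction; with that in hand the corollary follows immediately from Proposition \ref{propdiper}.
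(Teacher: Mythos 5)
Your proposal is correct and follows essentially the same route as the paper, which derives the corollary directly from Proposition \ref{propdiper} together with the choice of $a$ in \eqref{defa}; your write-up simply makes explicit the details the paper leaves as ``easy to verify'' (the constant $\ovl{u}_l$ or $\ovl{u}_r$ cancels in \eqref{minpoint}, so under the normalization \eqref{ave0} one may take $\ovl{x}=a$ for both $u_l$ and $u_r$, and substituting into \eqref{diper} gives exactly the lines \eqref{dilr} on which the solution is the corresponding constant).
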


By Lemma \ref{lemgre}, one can prove 
\begin{Lem}\label{lemglue}
	Let $c_1>c_2$ be two constants and $u_0(x)\in L^{\infty}(\R)$, and let $u_1(x,t), u_2(x,t), u_{12}(x,t)$ be the entropy solutions to \eqref{equ1} with their corresponding initial data
	\begin{align*}
		& u_1(x,t=0)=c_1+u_0(x),\\
		& u_2(x,t=0)=c_2+u_0(x),\\
		& u_{12}(x,t=0)=
		\begin{cases}
			c_1+u_0(x), \quad \text{if ~} x<0,\\
			c_2+u_0(x), \quad \text{if ~} x>0.
		\end{cases}
	\end{align*}
	Let $x_-(t), x_+(t)$ be the minimal and maximal forward generalized characteristics issuing from the origin associated with $u_{12}$ (see Figure \ref{ufig}, note that $x_-$ and $x_+$ may not be distinct), then 
	\begin{equation}
		u_{12}(x,t) =
		\begin{cases}
			u_1(x,t), \quad \text{if ~} x<x_+(t),\\
			u_2(x,t), \quad \text{if ~} x>x_-(t).
		\end{cases}
	\end{equation}
\end{Lem}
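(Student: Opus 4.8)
The plan is to prove the two assertions separately; by the obvious left--right symmetry (reflecting $x\mapsto -x$ and interchanging the roles of $c_1,c_2$ and of $x_+,x_-$) it suffices to establish $u_{12}(x,t)=u_1(x,t)$ for $x<x_+(t)$, the companion identity $u_{12}=u_2$ for $x>x_-(t)$ following in exactly the same way with $x_-$ and $u_2$. So I fix a point $(x_0,t_0)$ with $t_0>0$ and $x_0<x_+(t_0)$ and aim to show $u_{12}(x_0\pm,t_0)=u_1(x_0\pm,t_0)$, which yields the stated identity at every point of approximate continuity, hence a.e.

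The crucial geometric step is to locate the backward characteristics of $u_{12}$. Let $\xi^{+}(t)$ and $\xi^{-}(t)$ be the maximal and minimal backward characteristics of $u_{12}$ emanating from $(x_0,t_0)$; by Proposition \ref{propgen} these are straight lines on $[0,t_0]$ along which $u_{12}\equiv u_{12}(x_0\pm,t_0)$. I would show $\xi^{\pm}(0)\le 0$. Indeed, if $\xi^{+}(0)>0=x_+(0)$, then since $\xi^{+}(t_0)=x_0<x_+(t_0)$ the continuous curves $\xi^{+}$ and $x_+$ must cross at some $t_*\in(0,t_0)$, at a point $(x_*,t_*)$ with $t_*>0$. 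But then $\xi^{+}|_{[t_*,t_0]}$ and $x_+|_{[t_*,\infty)}$ are both forward characteristics of $u_{12}$ issuing from $(x_*,t_*)$, so the uniqueness of forward characteristics \eqref{foruni} forces them to coincide on $[t_*,t_0]$, giving $x_0=\xi^{+}(t_0)=x_+(t_0)$, a contradiction. Hence $\xi^{-}(0)\le\xi^{+}(0)\le 0$, and both extremal backward characteristics of $u_{12}$ land in $(-\infty,0]$, precisely where $u_{12}(\cdot,0)=u_1(\cdot,0)=c_1+u_0(\cdot)$.

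Next I would compare $u_{12}$ with $u_1$ through the integral identity \eqref{tri}. Working first with the maximal characteristics, let $\eta(t)$ be the maximal backward characteristic of $u_1$ from $(x_0,t_0)$, landing at $\eta(0)$. I claim $\xi^{+}(0)=\eta(0)$, which (equal endpoints and equal starting points, hence equal slopes $f'(\cdot)$) immediately yields $u_{12}(x_0+,t_0)=u_1(x_0+,t_0)$ by strict monotonicity of $f'$. Suppose instead $\xi^{+}(0)>\eta(0)$ and apply Lemma \ref{lemgre} with $u=u_{12}$ (along $\xi^{+}$) and $\widetilde u=u_1$ (along $\eta$): the right-hand side of \eqref{tri} equals $\int_{\eta(0)}^{\xi^{+}(0)}[u_{12}(x,0)-u_1(x,0)]\,dx=0$, since the whole interval lies in $(-\infty,0]$. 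Because $f''>0$, each integrand on the left-hand side of \eqref{tri} is sign-definite (a Bregman-type quantity $f(b)-f(a)-f'(b)(b-a)\le 0$, vanishing iff $a=b$), so both must vanish identically; letting $t\to t_0$ in the integrand that evaluates $u_{12}$ along $\eta$ then gives $u_{12}(x_0+,t_0)=u_1(x_0+,t_0)$. Yet the strict ordering $\xi^{+}(0)>\eta(0)$, via the slopes $\xi^{+\prime}=f'(u_{12}(x_0+,t_0))$ and $\eta'=f'(u_1(x_0+,t_0))$, forces the strict inequality $u_{12}(x_0+,t_0)<u_1(x_0+,t_0)$ — a contradiction. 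The case $\xi^{+}(0)<\eta(0)$ is handled symmetrically (and, using that $u_1(\cdot,0)-u_{12}(\cdot,0)\ge 0$ on $(0,\infty)$, additionally forces $\eta(0)\le 0$ so that the same argument applies). Thus $\xi^{+}(0)=\eta(0)$ and $u_{12}(x_0+,t_0)=u_1(x_0+,t_0)$; repeating the argument with the minimal backward characteristics yields the $x_0-$ identity, and the case $x>x_-(t)$ is the mirror image.

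I expect the main obstacle to be the geometric localization — proving $\xi^{\pm}(0)\le 0$ by the no-crossing argument resting on uniqueness of forward characteristics \eqref{foruni} — together with the careful bookkeeping of orientation and signs when invoking \eqref{tri}, namely choosing which solution plays the role of $u$ versus $\widetilde u$ so that the hypothesis $\widetilde\xi(0)<\xi(0)$ holds and the right-hand side carries a definite sign. Once the backward characteristics are known to land in the common-data region $(-\infty,0]$, the vanishing of the sign-definite integrands in \eqref{tri} is exactly what upgrades ``equal initial data on the landing interval'' to pointwise equality of the two entropy solutions.
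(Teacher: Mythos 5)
Your proposal follows the paper's strategy almost step for step: the symmetry reduction, the localization $\xi^{\pm}(0)\le 0$ via uniqueness of forward characteristics (your crossing argument is a correct filling-in of the paper's terse parenthetical remark), and the use of Lemma \ref{lemgre} plus strict convexity to force both integrands in \eqref{tri} to vanish, with the same sign bookkeeping on the right-hand side. The difference — and the gap — lies in how you extract a contradiction from the vanishing integrands.

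You conclude from $u_{12}(\eta(t)+,t)\equiv u_1(x_0+,t_0)$ (a.e.\ $t$) that ``letting $t\to t_0$'' gives $u_{12}(x_0+,t_0)=u_1(x_0+,t_0)$. That limit is not justified: an entropy solution need not converge to its one-sided limit $u_{12}(x_0+,t_0)$ along an arbitrary backward ray into $(x_0,t_0)$, and the failure occurs precisely in a scenario you cannot exclude, namely when $u_{12}$ has a shock at $(x_0,t_0)$. In that case $\xi^-<\xi^+$, and under your standing assumption $\eta(0)<\xi^+(0)$ the ray $\eta$ lies strictly to the left of $\xi^+$, possibly inside the wedge between $\xi^-$ and $\xi^+$; for instance, for Burgers' equation with shock Riemann data, $u_{12}$ along such a ray is identically equal to $u_{12}(x_0-,t_0)$, so the vanishing of that integrand only yields $u_1(x_0+,t_0)=u_{12}(x_0-,t_0)$, which contradicts nothing. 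The repair — and this is exactly what the paper does, with the roles of the two solutions in \eqref{tri} arranged accordingly — is to use the \emph{other} vanishing integrand: $u_1(\xi^+(t)-,t)\equiv u_{12}(x_0+,t_0)$ on $[0,t_0]$ means the slope of the straight line $\xi^+$ equals $f'(u_1(\xi^+(t)-,t))$ at a.e.\ $t$, so $\xi^+$ satisfies the differential inclusion of Definition \ref{defgen} for $u_1$, i.e.\ it is a backward generalized characteristic of $u_1$ through $(x_0,t_0)$. Maximality of $\eta$ among such characteristics forces $\xi^+(0)\le\eta(0)$, contradicting $\eta(0)<\xi^+(0)$. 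The mirror case $\xi^+(0)<\eta(0)$ must be repaired the same way (there one uses $u_{12}(\eta(t)-,t)\equiv u_1(x_0+,t_0)$ and the maximality of $\xi^+$ with respect to $u_{12}$, as in the paper's first case). With these replacements your argument closes and coincides with the paper's proof.
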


\begin{proof}[Proof of Lemma \ref{lemglue}]
	Without loss of generality, we prove only the case when $x<x_+(t)$.
	
	For any fixed $(\ovl{x},\ovl{t}) $ with $\ovl{x}<x_+(\ovl{t}),~ \ovl{t}>0$, we firstly prove that $u_{12}(\ovl{x}+,\ovl{t})=u_1(\ovl{x}+,\ovl{t})$.
	Through $(\ovl{x},\ovl{t}) $ we draw the maximal backward characteristics $\xi_+(t)$ and $\eta_+(t)$ corresponding to the entropy solutions $u_{12}$ and $u_1$ respectively. By Proposition \ref{propgen}, $\xi_+(t)$ and $\eta_+(t)$ are both straight lines, and for $0<t<\ovl{t}$, 
	$$ u_{12}(\xi_+(t)+,t)=u_{12}(\xi_+(t)-,t)=u_{12}(\ovl{x}+,\ovl{t}),\quad \xi_+'(t)=f'(u(\ovl{x}+,\ovl{t})) $$
	$$u_1(\eta_+(t)+,t)=u_1(\eta_+(t)-,t)=u_1(\ovl{x}+,\ovl{t}),\quad \eta_+'(t)=f'(u_1(\ovl{x}+,\ovl{t})) $$
	Then $\xi_+(0)\leq 0$ since $\xi_+(t)$ cannot cross through another generalized characteristic $x_+(t)$ at $t>0$ (since the forward characteristic issuing from any point $(x,t)$ with $t>0$ is unique, by Proposition \ref{propgen}). See Figure \ref{ufig}.

	\begin{figure}[h]
		\includegraphics[scale=0.4]{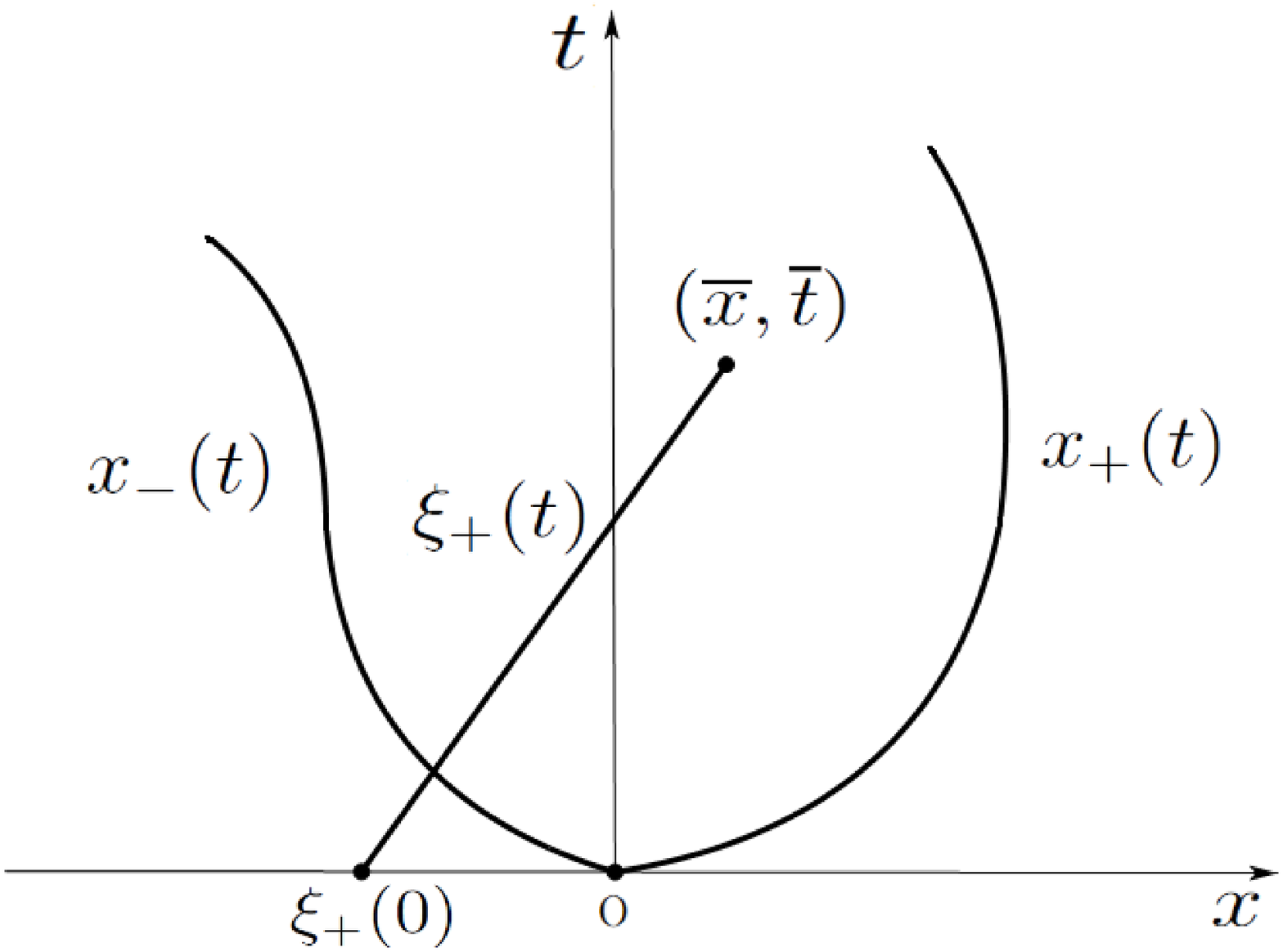} 
		\caption{}
		\label{ufig}
	\end{figure}
	
	If $u_{12}(\ovl{x}+,\ovl{t})> u_1(\ovl{x}+,\ovl{t})$, then $\xi_+(0)<\eta_+(0)$. using \eqref{tri} with $u=u_1, \widetilde{u}=u_{12}, \xi=\eta_+, \widetilde{\xi}=\xi_+$, one can have that 
	\begin{align}
	& \int_0^{\ovl{t}} \{ f(b)-f(u_{12}(\eta_+(t)-,t)) - f'(b)[b- u_{12}(\eta_+(t)-,t)] \} ~dt \notag \\
	& + \int_0^{\ovl{t}} \{ f(\widetilde{b})-f(u_1(\xi_+(t)+,t)) - f'(\widetilde{b})[\widetilde{b}-u_1(\xi_+(t)+,t)] \} ~dt \label{tril} \\
	& =\int_{\xi_+(0)}^{\eta_+(0)} (u_1(x,0)-u_{12}(x,0)) ~dx \notag \\
	& =\begin{cases}
	0,\quad \quad \quad \quad \quad \quad \quad\quad\quad\quad \text{~if ~} \eta_+(0)\leq 0;\\
	\int_0^{\eta_+(0)} (c_1-c_2) ~dx >0, \quad\text{~if ~} \eta_+(0)> 0
	\end{cases} \quad\quad\quad \geq 0.	\notag
	\end{align}
	here $b=u_1(\ovl{x}+,\ovl{t}), \widetilde{b}=u_{12}(\ovl{x}+,\ovl{t}).$ 

	By the strict convexity of $f$, the left hand side of \eqref{tril} is non-positive, and we have that for $t \in [0,\ovl{t}]$,
	\begin{align}
	& u_{12}(\eta_+(t)-,t) \equiv b= u_1(\ovl{x}+,\ovl{t}),\label{for1} \\
	& u_1(\xi_+(t)+,t) \equiv \widetilde{b}=u_{12}(\ovl{x}+,\ovl{t}).\label{for2}
	\end{align}
	Then \eqref{for1} implies that 
	$$ \eta_+'(t)=f'(u_1(\ovl{x}+,\ovl{t}))=f'(u_{12}(\eta_+(t)-,t))$$
	which means that $\eta_+(t)$ is a backward generalized characteristic through $(\ovl{x},\ovl{t})$ associated with $u$. However, $\xi_+(t)$ is the maximal backward characteristic associated with $u$, thus there must hold $\eta_+(t) \leq \xi_+(t), t\in [0,\ovl{t}]$, which contradicts with $\xi_+(0)<\eta_+(0)$.
	
	Similarly, if $ u_{12}(\ovl{x}+,\ovl{t})< u_1(\ovl{x}+,\ovl{t}) $, it means $ \eta_+(0)<\xi_+(0)\leq 0 $. Then same argument as above can verify that for $ t \in [0,\ovl{t}], u_1(\xi_+(t)-,t) \equiv u_{12}(\ovl{x}+,\ovl{t}) $, which implies that $ \xi_+'(t)=f'(u_1(\xi_+(t)-,t)) $. It means that $\xi_+(t)$ is a backward generalized characteristic associated with $u_1$, then there holds $ \xi_+(t)\leq \eta_+(t), t\in [0,\ovl{t}] $, which is also a contradiction. 
	
	And the proof to $ u_{12}(\ovl{x}-,\ovl{t})=u_1(\ovl{x}-,\ovl{t}) $ is similar. Since one can draw the minimal backward characteristics $\xi_-(t)$ and $\eta_-(t)$ corresponding to $u_{12}$ and $u_1$ respectively, and then use the similar argument as above.
\end{proof}

By Lemma \ref{lemglue}, one can easily prove

\begin{Prop}\label{propglue} The following properties hold:
	\begin{enumerate}
		\item[(i).] If $\ovl{u}_l>\ovl{u}_r$, then
		\begin{equation*}
			u(x,t)=
			\begin{cases}
			    u_l(x,t), &\text{~if~} x<X_+(t),\\
			    u_r(x,t), &\text{~if~} x>X_-(t).
			\end{cases}
		\end{equation*}

		\item[(ii).] If $\ovl{u}_l<\ovl{u}_r$, then
		\begin{equation*}
			u(x,t)=
			\begin{cases}
			    u_l(x,t), &\text{~if~} x<X_{l+}(t),\\
			    u_r(x,t), &\text{~if~} x>X_{r-}(t).
			\end{cases}
		\end{equation*}
	\end{enumerate}
	Here $X_{l+}, X_{r-}, X_{\pm} $ are defined in \eqref{dX}.
\end{Prop}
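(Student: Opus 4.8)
The plan is to split along the two regimes, since the shock configuration (i) is covered directly by Lemma \ref{lemglue}, whereas the rarefaction configuration (ii) is not and must be reproved by hand. For part (i), with $\ovl{u}_l>\ovl{u}_r$, I would apply Lemma \ref{lemglue} with $c_1=\ovl{u}_l$, $c_2=\ovl{u}_r$ (so $c_1>c_2$) and $u_0=w_0$. Under the notation \eqref{dsol} this identifies $u_1=u_l$, $u_2=u_r$, $u_{12}=u$, and the minimal and maximal forward generalized characteristics of $u_{12}=u$ issuing from the origin are exactly the $X_-$, $X_+$ of \eqref{dX}. The lemma then gives $u=u_l$ for $x<X_+(t)$ and $u=u_r$ for $x>X_-(t)$ verbatim.

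For part (ii) the constants are ordered the wrong way for Lemma \ref{lemglue} (now $\ovl{u}_l<\ovl{u}_r$), so I would instead reprove the gluing by the same device used in the proof of that lemma, namely the integral identity \eqref{tri} of Lemma \ref{lemgre}, comparing $u$ against $u_l$ (resp. $u_r$). I describe only the left part, $u=u_l$ for $x<X_{l+}(t)$, the right part being symmetric with minimal characteristics and $X_{r-}$. Fix $(\ovl{x},\ovl{t})$ with $\ovl{x}<X_{l+}(\ovl{t})$, and let $\eta_+$ and $\xi_+$ be the maximal backward characteristics through $(\ovl{x},\ovl{t})$ of $u_l$ and of $u$ respectively, both straight lines by Proposition \ref{propgen}. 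The first key step is to show $\eta_+(0)\le 0$: since $X_{l+}$ is the maximal forward characteristic of $u_l$ from the origin and forward characteristics are unique (Proposition \ref{propgen}), $\eta_+$ cannot meet $X_{l+}$ on $(0,\ovl{t}]$ without coinciding with it thereafter, which would force $\ovl{x}=X_{l+}(\ovl{t})$; hence $\eta_+(t)<X_{l+}(t)$ on $(0,\ovl{t}]$ and $\eta_+(0)\le X_{l+}(0)=0$. Thus $\eta_+$ lands in $x\le 0$, where the data of $u$ and $u_l$ coincide.

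Next I would rule out $u(\ovl{x}+,\ovl{t})\ne u_l(\ovl{x}+,\ovl{t})$ just as in Lemma \ref{lemglue}, using that $u(x,0)=u_l(x,0)$ for $x<0$ while $u(x,0)-u_l(x,0)=\ovl{u}_r-\ovl{u}_l>0$ for $x>0$. If $u(\ovl{x}+,\ovl{t})>u_l(\ovl{x}+,\ovl{t})$, then $\xi_+(0)<\eta_+(0)\le 0$, so the right-hand side of \eqref{tri} (integrated over $(\xi_+(0),\eta_+(0))$, a subset of the negative axis) vanishes; strict convexity of $f$ makes the left-hand side nonpositive with equality forcing $u(\eta_+(t)-,t)\equiv u_l(\ovl{x}+,\ovl{t})$, so $\eta_+$ is also a backward characteristic of $u$, contradicting the maximality of $\xi_+$ through $\eta_+(0)\le\xi_+(0)$. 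If instead $u(\ovl{x}+,\ovl{t})<u_l(\ovl{x}+,\ovl{t})$, then $\eta_+(0)<\xi_+(0)$ and the right-hand side of \eqref{tri}, now $\int_{\eta_+(0)}^{\xi_+(0)}[u(x,0)-u_l(x,0)]\,dx\ge 0$, is either strictly positive (impossible against the nonpositive left-hand side) or zero, in which case the same convexity/coincidence argument makes $\xi_+$ a backward characteristic of $u_l$, contradicting the maximality of $\eta_+$. Hence $u(\ovl{x}+,\ovl{t})=u_l(\ovl{x}+,\ovl{t})$; the minimal-characteristic version yields the $-$ trace, giving $u=u_l$ for $x<X_{l+}(t)$, and symmetrically $u=u_r$ for $x>X_{r-}(t)$.

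I expect the only real obstacle to be case (ii). The reversed ordering of $\ovl{u}_l,\ovl{u}_r$ blocks a direct appeal to Lemma \ref{lemglue}, so the genuinely new inputs are the non-crossing argument that pins $\eta_+(0)\le 0$ and the careful tracking of the sign of the right-hand side of \eqref{tri} in each of the two subcases; the shock case (i) is then immediate.
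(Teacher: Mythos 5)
Your part (i) coincides with the paper's proof. Part (ii) is also correct, but you take a genuinely different (and longer) route, because your premise that Lemma \ref{lemglue} cannot be invoked when $\ovl{u}_l<\ovl{u}_r$ is not quite right: the reversed ordering only blocks the naive choice $u_{12}=u$, not the lemma itself. The paper instead observes that $u_l(x,0)$ equals $u(x,0)$ for $x<0$ and equals $(\ovl{u}_l-\ovl{u}_r)+u(x,0)$ for $x>0$, so one may apply Lemma \ref{lemglue} with $u_0(x):=u(x,0)$, $c_1:=0$, $c_2:=\ovl{u}_l-\ovl{u}_r<0$, which makes $u_{12}=u_l$ and $u_1=u$; the lemma then yields $u=u_l$ for $x<X_{l+}(t)$ at once, and symmetrically the choice $u_{12}=u_r$, $u_2=u$ yields $u=u_r$ for $x>X_{r-}(t)$. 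What you do instead is rerun the proof of Lemma \ref{lemglue} with the roles of glued and pure data swapped, and every step checks out: the non-crossing argument pinning $\eta_+(0)\le 0$ is legitimate (if $\eta_+$ touched $X_{l+}$ at a positive time, forward uniqueness in Proposition \ref{propgen} would force $\ovl{x}=X_{l+}(\ovl{t})$), the right-hand side of \eqref{tri} indeed vanishes in your first case ($\xi_+(0)<\eta_+(0)\le 0$, where the two data agree) and is nonnegative in the second (since $u(x,0)-u_l(x,0)=\ovl{u}_r-\ovl{u}_l>0$ on $x>0$), and the two maximality contradictions are drawn correctly. The trade-off is clear: your version is self-contained but duplicates the lemma's argument nearly verbatim; the paper's version is a two-line reduction that exposes the real content of Lemma \ref{lemglue}, namely that it never matters which of $u_1$, $u_2$, $u_{12}$ is ``the'' perturbed Riemann solution, only that the three initial data differ by constants $c_1>c_2$ across $x=0$.
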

\begin{proof}[Proof of Proposition \ref{propglue}] $ $\\
	\begin{enumerate}
		\item[(i).] Note that $\ovl{u}_l>\ovl{u}_r$. Thus one needs to take $ u_{12}=u, u_1=u_l, u_2=u_r$ in Lemma \ref{lemglue}, and then (1) follows easily.\\
		
		\item[(ii).] Note that $\ovl{u}_l<\ovl{u}_r$ and
		\begin{equation*}
		\begin{aligned}
		& u_l(x,t=0)=\begin{cases}
		u(x,t=0) & \text{if}~ x<0,\\
		\ovl{u}_l-\ovl{u}_r+u(x,t=0) & \text{if}~ x>0,\\
		\end{cases}\\
		& u_r(x,t=0)=\begin{cases}
		\ovl{u}_r-\ovl{u}_l+u(x,t=0) & \text{if}~ x<0,\\
		u(x,t=0) & \text{if}~ x>0.\\
		\end{cases}
		\end{aligned}
		\end{equation*} 
		Therefore, by taking $u_{12}=u_l, u_1=u$ and $u_{12}=u_r, u_2=u$ respectively in Lemma \ref{lemglue}, one can prove (2).
	\end{enumerate}
\end{proof}

\vspace{0.8cm}


\section{Proof of Theorem \ref{thmshock} and \ref{thmshock2}}
In this section we prove Theorem \ref{thmshock} and \ref{thmshock2}. 

\begin{proof}[Proof of Theorem \ref{thmshock}]
By Proposition \ref{propglue}, if $X_-(t) \equiv X_+(t)$ for $t \in [0,+\infty)$, then \eqref{glue} holds immediately.  

If $X_-(t)<X_+(t)$, then $u_l(x,t)$ coincides with $u_r(x,t)$ for $X_-(t)<x<X_+(t)$. But by \eqref{lb2}, after a finite time $T_S$, say $\dfrac{2C}{T_S} = \ovl{u}_l-\ovl{u}_r $, here $ C $ is the constant in \eqref{lb2}, it holds that
$$ u_l(x,t)>u_r(x,t), \quad  \text{~for~}  ~t>T_S,~ -\infty<x<+\infty, $$
which means that for $t>T_S$, $X_-(t) \equiv X_+(t)$ must hold. Thus \eqref{glue} is proved, then it remains to prove \eqref{lb}.

\vspace{0.5cm}

	For $N>0$, we define the trapezium:
	\begin{equation}\label{tra}
	\Omega^N(T) \de \{(x,t): 0\leq t\leq T,~ \Gamma_l^{-N}(t) \leq x \leq \Gamma_r^N(t)~\}
	\end{equation}
	For each $T>T_S, $ one can choose $N>0 $ large enough, s.t.
	\begin{equation}\label{Ga}
	\Gamma_l^{-N}(t) <X_-(t), \quad \Gamma_r^N(t) >X_+(t), \quad \text{for all~} 0\leq t\leq T, 
	\end{equation}
	see Figure \ref{green}.\\		
	\begin{figure}[h]
		\includegraphics[scale=0.35]{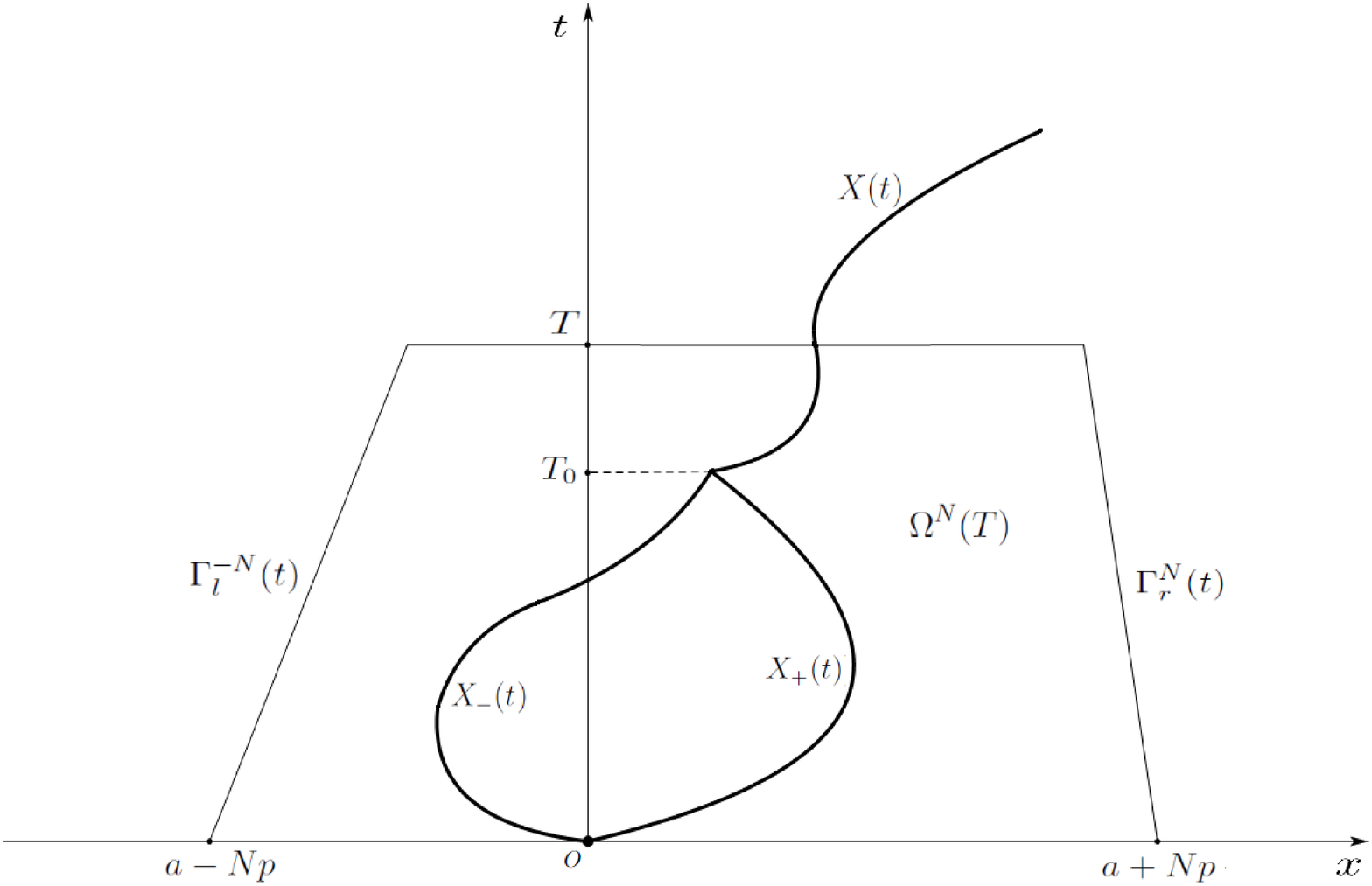} 
		\caption{}
		\label{green}
	\end{figure}

	Then applying the Green Formula in $\Omega^N(T) $ yields that
	\begin{align*}
	0=& \int_{\Omega^N(T)} \Big( \p_t u +\p_x f(u) \Big)~dxdt = \int_{\Gamma_l^{-N}(T)}^{\Gamma_r^N(T)} u(x,T) ~dx \\
	& -\int_{\Gamma_l^{-N}(0)}^{\Gamma_r^N(0)} u(x,0)~dx - \int_0^T \Big\{ f'(\ovl{u}_r)~u_r(\Gamma_r^N(t),t)-f\Big(u_r(\Gamma_r^N(t),t)\Big) \Big\}~dt \\
	& +\int_0^T \Big\{ f'(\ovl{u}_l)~u_l(\Gamma_l^{-N}(t),t)-f\Big(u_l(\Gamma_l^{-N}(t),t)\Big) \Big\}~dt \de I_1+I_2+I_3+I_4.
	\end{align*}
	It follows from \eqref{glue} that
	\begin{align}
	I_1= & \int_{\Gamma_l^{-N}(T)}^{X(T)} u_l(x,T)~dx +\int_{X(T)}^{\Gamma_r^N(T)} u_r(x,T)~dx \label{I1} \\
	= & \int_{\Gamma_l^{-N}(T)}^{X(T)} \Big(u_l(x,T)-\ovl{u}_l\Big)~dx +\int_{X(T)}^{\Gamma_r^N(T)} \Big(u_r(x,T)-\ovl{u}_r \Big)~dx \notag\\
	& +(X(T)-\Gamma_l^{-N}(T))\ovl{u}_l+ (\Gamma_r^N(T)-X(T))\ovl{u}_r \notag
	\end{align}
	Note that 
	\begin{equation*}
		\int_x^{x+p} \Big(u_l(y,t)-\ovl{u}_l\Big)~dy=\int_x^{x+p} \Big(u_r(y,t)-\ovl{u}_r\Big)~dy=0, \quad \forall~ x \in \R.
	\end{equation*}
	So by \eqref{lb2}, the first two terms in $I_1$ satisfy
	\begin{equation}\label{leq}
	\Big|\int_{\Gamma_l^{-N}(T)}^{X(T)} \Big(u_l(x,T)-\ovl{u}_l\Big)~dx +\int_{X(T)}^{\Gamma_r^N(T)} \Big(u_r(x,T)-\ovl{u}_r \Big)~dx \Big|\leq \frac{C}{T}.
	\end{equation}
	
	By \eqref{ave0}, $\Gamma_l^{-N}(0)=a-Np$ and $\Gamma_r^N(0)=a+Np, $ one has 
	\begin{align}
	I_2=& -\int_{\Gamma_l^{-N}(0)}^0 \Big( w_0(x)+\ovl{u}_l \Big)~dx  - \int_0^{\Gamma_r^N(0)} \Big( w_0(x)+\ovl{u}_r \Big) ~dx \label{I2} \\
	=& (a-Np)\ovl{u}_l-(a+Np)\ovl{u}_r \notag
	\end{align}
	And Corollary \ref{corgamma} implies that 
	\begin{align}
	I_3+I_4=& - \int_0^T \Big\{ f'(\ovl{u}_r)~u_r(\Gamma_r^N(t),t)-f\Big(u_r(\Gamma_r^N(t),t)\Big) \Big\}~dt \label{I34}\\ 
	&+\int_0^T \Big\{ f'(\ovl{u}_l)~u_l(\Gamma_l^{-N}(t),t)-f\Big(u_l(\Gamma_l^{-N}(t),t)\Big) \Big\}~dt \notag\\
	=& - \int_0^T \Big( f'(\ovl{u}_r)\ovl{u}_r-f(\ovl{u}_r) \Big)~dt +\int_0^T \Big( f'(\ovl{u}_l)\ovl{u}_l-f(\ovl{u}_l) \Big)~dt \notag\\
	=& \Big\{ f'(\ovl{u}_l)\ovl{u}_l-f'(\ovl{u}_r)\ovl{u}_r-\Big(f(\ovl{u}_l)-f(\ovl{u}_r)\Big) \Big\}T. \notag
	\end{align}
	Thus by \eqref{I1}, \eqref{I2}, \eqref{I34}, and noting that $\Gamma_l^{-N}(T)= a-Np+f'(\ovl{u}_l)T,~ \Gamma_r^N(T)= a+Np+f'(\ovl{u}_r)T $, one has 
	\begin{align}
	& X(T)- sT \label{Xfor}\\
	& = \frac{-1}{\ovl{u}_l-\ovl{u}_r} \Big\{ \int_{\Gamma_l^{-N}(T)}^{X(T)} \Big(u_l(x,T)-\ovl{u}_l\Big)~dx 
	+\int_{X(T)}^{\Gamma_r^N(T)} \Big(u_r(x,T)-\ovl{u}_r \Big)~dx \Big\}.\notag
	\end{align}
	Then by \eqref{leq} and \eqref{Xfor}, it holds that for $T>T_S, $
	$$ \Big| X(T)-sT| \leq \frac{C}{T}. $$
	Finally, one has
	\begin{align*}
	& \sup_{x<X(t)} |u(x,t)-\ovl{u}_l| +\sup_{x>X(t)} |u(x,t)-\ovl{u}_r| + |X(t)-st| \\
	=& \sup_{x<X(t)} |u_l(x,t)-\ovl{u}_l| +\sup_{x>X(t)} |u_r(x,t)-\ovl{u}_r| + |X(t)-st|\\
	\leq & \frac{C}{t}, \quad \forall~ t>T_S,
	\end{align*} 
	where $C$ depends on $\ovl{u}_l, \ovl{u}_r, p, f.$\\
	This completes the proof of Theorem \ref{lb}.	
\end{proof}

\begin{proof}[Proof of Theorem \ref{thmshock2}]
		When $f(u)=\dfrac{u^2}{2}, $ by Galilean transformation, one has that 
	$$ u_l(x,t)=w(x-\ovl{u}_l t,t)+\ovl{u}_l,~ u_r(x,t)=w(x-\ovl{u}_r t, t)+\ovl{u}_r. $$
	Then by \eqref{Xfor}, it holds that 
	\begin{align*}
	X(t)-st  &= \frac{-1}{\ovl{u}_l-\ovl{u}_r} \Big\{ \int_{\Gamma_l^{-N}(t)}^{X(t)}w(x-\ovl{u}_l t,t)~dx 
	+\int_{X(t)}^{\Gamma_r^N(t)} w(x-\ovl{u}_r t,t)~dx \Big\}\\
	&= \frac{-1}{\ovl{u}_l-\ovl{u}_r} \Big\{ \int_{a-Np}^{X(t)-\ovl{u}_l t} w(y,t)~dy + \int_{X(t)-\ovl{u}_r t}^{a+Np} w(y,t)~dy \Big\}\\
	&= \frac{1}{\ovl{u}_l-\ovl{u}_r}\int_{X(t)-\ovl{u}_l t}^{X(t)-\ovl{u}_r t} w(y,t)~dy
	\end{align*}
	If $ (\ovl{u}_l-\ovl{u}_r)t=np $ for any positive integer $ n, $ then $X(t)=st,$ which means that the perturbed shock $x=X(t) $ will coincide with the background shock $x=st $ after a period of time $\dfrac{p}{\ovl{u}_l-\ovl{u}_r}$. 
\end{proof}

\vspace{0.8cm}
\section{Proof of Theorem \ref{thmrare} and \ref{thmrare2}}
In this section, we will prove Theorem \ref{thmrare} and \ref{thmrare2}.

\begin{proof}[Proof of Theorem \ref{thmrare}]
	Since $\Gamma_r^{-1}(t)=f'(\ovl{u}_r)t+a-p$ and $\Gamma_r^0(t)=f'(\ovl{u}_r)t+a$ are two divides corresponding to $u_r(x,t)$ (see Corollary \ref{corgamma}), thus the characteristic $X_{r-}(t)$ associated with $u_r$, which is defined in \eqref{dX}, cannot run out of the region between these two divides, that is 
	\begin{equation*}
	\Gamma_r^{-1}(t) \leq X_{r-}(t) \leq \Gamma_r^0(t), \quad \forall~ t>0.
	\end{equation*}
	And similarly, it holds that 
	\begin{equation*}
	\Gamma_l^{-1}(t) \leq X_{l+}(t) \leq \Gamma_l^0(t), \quad \forall~ t>0. 		
	\end{equation*}
	See Figure \ref{rarefig}.

\begin{figure}[h]
	\includegraphics[scale=0.4]{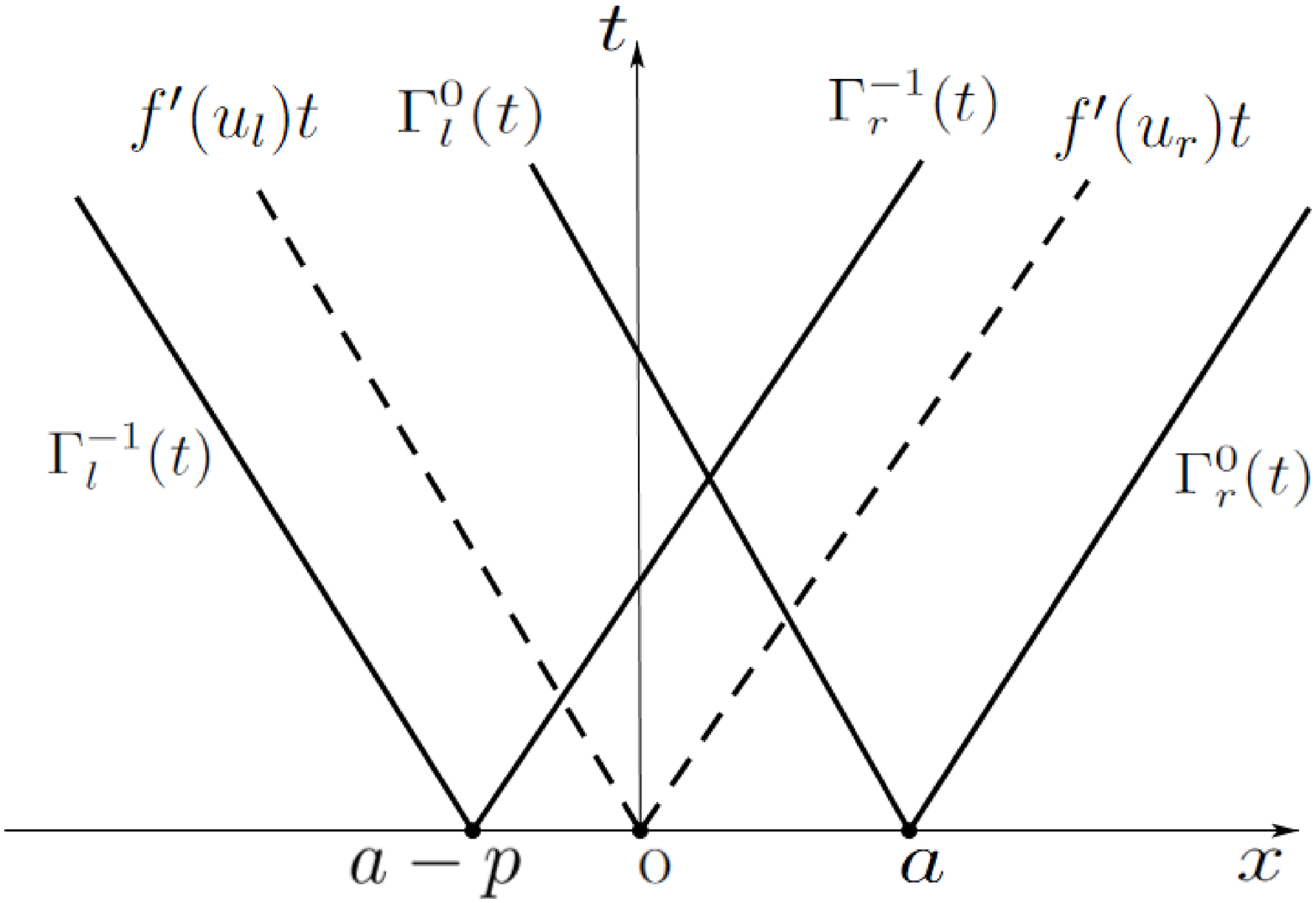} 
	\caption{}
	\label{rarefig}
\end{figure}
	
Now by Proposition \ref{propglue}, if $x<\Gamma_l^{-1}(t)$, then $u(x,t)=u_l(x,t)$; and if $x>\Gamma_r^0(t)$, then $u(x,t)=u_r(x,t)$. And if $\Gamma_l^{-1}(t)< x <\Gamma_r^0(t)$, the following claim holds.

\textbf{Claim:} For $\Gamma_l^{-1}(t)<x<\Gamma_r^0(t)$, the extremal backward characteristics associated with $u$ cannot intersect with $\Gamma_l^{-1}(t)$ nor $\Gamma_r^0(t)$ for $t>0$. 

In fact, as $u(\Gamma_l^{-1}(t)-,t)=u_l(\Gamma_l^{-1}(t)-,t) \equiv \ovl{u}_l$, thus by entropy condition, one has $$ u(\Gamma_l^{-1}(t)+,t) \leq \ovl{u}_l \quad \text{for any } ~t>0, $$ so if there exists any point $ (\ovl{x}, \ovl{t}) $ between $\Gamma_l^{-1}$ and $\Gamma_r^0,$ such that the minimal backward characteristic of $u$ issuing from $ (\ovl{x}, \ovl{t}) $ intersects with $\Gamma_l^{-1}$ at a point $ (\Gamma_l^{-1}(\tau),\tau)$ with $\tau>0$, then its slope $f'(u(\Gamma_l^{-1}(\tau)+,\tau)) >f'(\ovl{u}_l)$ which is the slope of $\Gamma_l^{-1}$ (see Proposition \ref{propgen}), then $u(\Gamma_l^{-1}(\tau)+,\tau)>\ovl{u}_l$ by strict convexity of $f$, which turns out to be a contradiction. Similarly, one can show that maximal backward characteristic issuing from any point between $ \Gamma_l^{-1} $ and $ \Gamma_r^0 $ will not intersect with $\Gamma_r^0$.

Thus by the arguments above, one may conclude that for any fixed $t>0$,
    \begin{enumerate}
		\item[1)] If $x< \Gamma_l^{-1}(t)$, then combined with \eqref{lb2}, one has $|u(x,t)-u^R(x,t)|=|u_l(x,t)-\ovl{u}_l|\leq \dfrac{C}{t}$.
		
		\item[2)] If $\Gamma_l^{-1}(t)< x<f'(\ovl{u}_l)t$, by Claim, one has 
		\begin{align*}
		& \dfrac{x-a}{t}\leq f'(u(x,t))\leq\dfrac{x-(a-p)}{t}\\
        \Longrightarrow \quad & \dfrac{a-p+f'(\ovl{u}_l)t-a}{t}\leq f'(u(x,t))\leq\dfrac{f'(\ovl{u}_l)t-(a-p)}{t}\\
        \Longrightarrow \quad & -\dfrac{p}{t} \leq f'(u(x,t))-f'(\ovl{u}_l)\leq \dfrac{p}{t}\\
		\Longrightarrow \quad & |u(x,t)-\ovl{u}_l| \leq \dfrac{C}{t}
		\end{align*}
		here $a\in[0,p)$ is used. 
		Therefore, $|u(x,t)-u^R(x,t)|=|u(x,t)-\ovl{u}_l|\leq \dfrac{C}{t}$.
		
		\item[3)] If $f'(\ovl{u}_l)t\leq x< f'(\ovl{u}_r)t$, then $ u^R(x,t)=(f')^{-1}(\dfrac{x}{t}) $, and similarly to 2), by Claim, one still has
		\begin{align*}
			& \dfrac{x-a}{t}\leq f'(u(x,t))\leq\dfrac{x-(a-p)}{t}\\
			\Longrightarrow \quad & | f'(u)-f'(u^R) | \leq \dfrac{p}{t}\\
			\Longrightarrow \quad & |u(x,t)-u^R(x,t)| \leq \dfrac{C}{t}
		\end{align*}		
		\item[4)] The other cases are similar.
	\end{enumerate}
Therefore, the proof is finished.
\end{proof}
\vspace{0.5cm}

\begin{proof}[Proof of Theorem \ref{thmrare2}]
	Since for all $x\in \mathbb{R}$, $\int_0^x w_0(y)~dy \geq 0$, thus one can choose $a=0$ in Corollary \ref{corgamma} for this case. Hence, $\Gamma_l^0(t)=f'(\ovl{u}_l)t\leq X_{l+}(t)$ and $X_{r-}(t)\leq \Gamma_r^0(t)=f'(\ovl{u}_r)t$, for all $t\geq 0$. So if $x<f'(\ovl{u}_l)t$, then $u(x,t)=u_l(x,t)$; if $x>f'(\ovl{u}_r)t$, then $u(x,t)=u_r(x,t)$; and if $f'(\ovl{u}_l)t\leq x\leq f'(\ovl{u}_r)t$, by similar arguments below Figure \ref{rarefig}, the extremal backward characteristics emanating from $(x,t)$ cannot cross through $\Gamma_l^0$ or $\Gamma_r^0$ at positive time, and thus both of them have to intersect with the $x$-axis at the origin, and hence $f'(u(x,t))=\dfrac{x}{t}=f'(u^R(x,t))$. The proof is finished.
\end{proof}

\vspace{0.8cm}

\section{Alternative proof of Theorem~\ref{thmshock} for Burger's equation}

In this section we present an alternative proof of \eqref{glue} in Theorem~\ref{thmshock} for the Burger's equation with initial data $u_0(x)$ in \eqref{ic1} with $ \ovl{u}_l > \ovl{u}_r, $ and $ w_0 $ satisfies \eqref{ave0}. This method depends on the Hopf's solution given in \cite{hopf}.

Denote the viscous solution $u^\e(x,t)$ to the viscous equation 
\begin{equation}\label{equ2}
	\p_t u^\e+\p_x \Big( \frac{(u^\e)^2}{2} \Big)= \e \p_x^2 u^\e, \quad x \in (-\infty, +\infty),\quad t>0,
\end{equation} with initial data \eqref{ic1} $ u^\e(x,0)=u_0(x)$. \\
By Hopf-Cole transformation $u^\e$ can be computed in an explicit formula,
\begin{equation}\label{hop}
	u^\e(x,t)=\int_{-\infty}^{\infty} \frac{x-y}{t} e^{-F(t,x,y)/2\e} dy \Big{/} \int_{-\infty}^{\infty} e^{-F(t,x,y)/2\e} dy.
\end{equation}
where $F(t,x,y)\de\frac{(x-y)^2}{2t}+\int_0^y u_0(z)dz.$
Let $\e \rightarrow 0, $ it is well known that $ u^\e(x,t) $ converges to the unique entropy solution $u(x,t)$ to \eqref{equ1}, \eqref{ic1} almost everywhere.\\

Before giving the proof, we need some notations.\\
Denote
\begin{align}
	& F_l(t,x,y) \de \frac{(x-y)^2}{2t}+\int_0^y \ovl{u}_l+w_0(z) \ dz;\label{fl}\\
	& F_r(t,x,y) \de \frac{(x-y)^2}{2t}+\int_0^y \ovl{u}_r+w_0(z) \ dz;\label{fr}\\
	& F(t,x,y) \de \begin{cases}
		F_l(t,x,y), \quad \text{~if } y \leq 0;\\
		F_r(t,x,y), \quad \text{~if~ } y \geq 0.
	\end{cases}\label{f}\\
	& Y_{l*}(t,x) \de \min_{z \in \R} \left\{ z: F_l(t,x,z)=\ml \right\};\label{ylb}\\
	& Y_l^*(t,x) \de \max_{z \in \R} \left\{ z: F_l(t,x,z)=\ml \right\};\label{ylt}\\
	& Y_{r*}(t,x) \de \min_{z \in \R} \left\{ z: F_r(t,x,z)=\mr \right\};\label{yrb}\\
	& Y_r^*(t,x) \de \max_{z \in \R} \left\{ z: F_r(t,x,z)=\mr \right\};\label{yrt}\\	
	& Y_*(t,x) \de \min_{z \in \R} \left\{ z: F(t,x,z)=\m \right\};\label{yb}\\
	& Y^*(t,x) \de \max_{z \in \R} \left\{ z: F(t,x,z)=\m \right\};\label{yt}\\
	& m_-(t,x) \de \mln  \quad \text{and} \quad m_+(t,x) \de \mrp. \label{defm}
\end{align}
As in \cite{hopf}, one has the following lemma
\begin{Lem}\label{lem1} The following properties hold:
	\begin{enumerate}
		\item[(i).] $ m_-(t,x) $ and $ m_+(t,x) $ are both continuous in $  t>0, x \in \R. $
		
		\item[(ii).] $ Y_{l*}(t,x),~ Y_{r*}(t,x),~ Y_*(t,x)$ are increasing and continuous to the left with respect to $ x, $ for any $ t>0. $
		
		\item[(iii).] $ Y_l^*(t,x),~ Y_r^*(t,x),~ Y^*(t,x) $ are increasing and continuous to the right with respect to $ x, $ for any $ t>0. $
		
		\item[(iv).] If $ x_1<x_2, $ then 
		\begin{align*}
			Y_l^*(t,x_1) \leq Y_{l*}(t,x_2),\quad
			Y_r^*(t,x_1) \leq Y_{r*}(t,x_2),\quad
			Y^*(t,x_1) \leq Y_*(t,x_2).
		\end{align*}
	\end{enumerate}

\end{Lem}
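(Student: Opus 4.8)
The plan is to treat all three pairs $\{Y_{l*},Y_l^*\}$, $\{Y_{r*},Y_r^*\}$, $\{Y_*,Y^*\}$ uniformly, using the fact that each arises by minimizing over $y$ a function of the common form $\Phi(t,x,y)=\frac{(x-y)^2}{2t}+g(y)$, where $g$ is one of $\int_0^y (\ovl{u}_l+w_0(z))\,dz$, $\int_0^y (\ovl{u}_r+w_0(z))\,dz$, or $\int_0^y u_0(z)\,dz$. Since $w_0\in L^\infty$, each $g$ is globally Lipschitz, so $g(y)=O(|y|)$ while $\frac{(x-y)^2}{2t}$ grows quadratically; hence for $(t,x)$ in any compact $K\subset(0,+\infty)\times\R$ there is $R=R(K)$ with $\Phi(t,x,y)>\Phi(t,x,0)$ whenever $|y|>R$. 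Thus the minimum is attained, the argmin set is a nonempty compact subset of $[-R,R]$ locally uniformly in $(t,x)$, and the extremal selections are finite and well defined. For $F$ the two pieces $F_l|_{y\le 0}$ and $F_r|_{y\ge 0}$ agree at $y=0$, so $\Phi$ is continuous in $y$ and the same reasoning applies.

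First I would establish the monotonicity in (ii), (iii) together with the separation property (iv) by a single exchange argument. Fix $t>0$ and $x_1<x_2$, and let $y_1,y_2$ be \emph{any} minimizers of $\Phi(t,x_1,\cdot)$ and $\Phi(t,x_2,\cdot)$. Adding the optimality inequalities $\Phi(t,x_1,y_1)\le\Phi(t,x_1,y_2)$ and $\Phi(t,x_2,y_2)\le\Phi(t,x_2,y_1)$ and simplifying (the $g$-terms and the $y^2/2t$ terms cancel) leaves
\[
0 \le \frac{(y_1-y_2)(x_1-x_2)}{t},
\]
so $y_1\le y_2$. Taking $y_1=Y^*(t,x_1)$ and $y_2=Y_*(t,x_2)$ gives exactly $Y^*(t,x_1)\le Y_*(t,x_2)$, i.e.\ (iv), while the weaker inequalities $Y_*(t,x_1)\le Y_*(t,x_2)$ and $Y^*(t,x_1)\le Y^*(t,x_2)$ give the monotonicity in (ii), (iii); the identical computation with $g$ replaced by the $l$- or $r$-integrand handles $Y_{l*},Y_l^*,Y_{r*},Y_r^*$.

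Next I would deduce the one-sided continuity. Since $Y_*(t,\cdot)$ is nondecreasing, for $x_n\uparrow x$ the limit $L=\lim_n Y_*(t,x_n)$ exists with $L\le Y_*(t,x)$; the points $Y_*(t,x_n)$ remain in a fixed compact interval, and passing to the limit in $\Phi(t,x_n,Y_*(t,x_n))\le\Phi(t,x_n,y)$ by joint continuity of $\Phi$ shows $L$ is itself a minimizer of $\Phi(t,x,\cdot)$, whence $Y_*(t,x)\le L$ by minimality; thus $L=Y_*(t,x)$ and $Y_*(t,\cdot)$ is left-continuous. The mirror argument with $x_n\downarrow x$ and the maximal selection gives right-continuity of $Y^*(t,\cdot)$, and likewise for the $l,r$ versions, completing (ii) and (iii). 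For (i), I would note that near any $(t_0,x_0)$ with $t_0>0$ the minimizations defining $m_-=\mln$ and $m_+=\mrp$ may, by the coercivity above, be restricted to a fixed compact $y$-interval; then the elementary bound $|{\min_y\phi(p_1,y)}-{\min_y\phi(p_2,y)}|\le\sup_y|\phi(p_1,y)-\phi(p_2,y)|$ together with the uniform continuity of $F_l,F_r$ on the resulting compact set yields continuity of $m_\pm$ in $(t,x)$.

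I expect the one-sided continuity to be the delicate point: one must pair the correct extremal selection with the correct one-sided limit (minimal with left, maximal with right) and invoke both the monotonicity and the closedness of the argmin correspondence, namely that a limit of minimizers is again a minimizer, the latter resting on the uniform confinement of minimizers to a bounded $y$-interval. By contrast, the exchange inequality underlying (iv) and the monotonicity, as well as the continuity of $m_\pm$, are then routine.
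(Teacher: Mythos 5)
Your proof is correct, but it takes a different (and more self-contained) route than the paper: the paper does not actually prove this lemma, it simply remarks that (i) follows from the continuity of $F_l$ and $F_r$ and that (ii)--(iv) are ``derived from Lemma 1'' of Hopf's paper \cite{hopf}, to which it defers entirely. What you have done is essentially to reprove Hopf's Lemma~1 from scratch: your exchange argument, adding the two optimality inequalities so that the potential terms cancel and only $(x_1-x_2)(y_1-y_2)\geq 0$ survives, is precisely the mechanism behind Hopf's monotonicity statement, and your closed-graph argument for the argmin correspondence (limits of minimizers are minimizers, paired minimal-selection-with-left-limits and maximal-selection-with-right-limits) recovers the one-sided continuity. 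What your version buys is twofold. First, independence from the reference. Second, and more substantively, you make explicit the coercivity point that the paper glosses over: continuity of $F_l, F_r$ alone does not give continuity of an infimum taken over the unbounded set $\{y\leq 0\}$ or $\{y\geq 0\}$; one needs the locally uniform confinement of near-minimizers to a compact $y$-interval (which you get from $w_0\in L^\infty$, so the potential is Lipschitz while the quadratic term dominates) before the elementary bound $|\min_y\phi(p_1,y)-\min_y\phi(p_2,y)|\leq\sup_y|\phi(p_1,y)-\phi(p_2,y)|$ can be applied on a compact set. This is exactly the step the paper's one-line justification of (i) leaves implicit, so your write-up fills a genuine (if minor) gap in exposition rather than merely duplicating the cited source.
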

\begin{proof}
	(i) can be proved easily by the fact that $F_r(t,x,y), F_l(t,x,y) $ are both continuous in $t,x,y.$ And (ii), (iii), (iv) are derived from Lemma 1 in \cite{hopf}.
\end{proof}

\begin{Prop}[Theorem 3 in \cite{hopf}]\label{prophopf}
	under the assumptions of Theorem \ref{thmshock}, it holds that for almost all $x \in \R, t>0, $ 
	\begin{align}
		& u_l(x+,t) = \frac{x- Y_l^*(x,t)}{t}, ~ u_l(x-,t) = \frac{x- Y_{l*}(x,t)}{t},\\
		& u_r(x+,t) = \frac{x- Y_r^*(x,t)}{t}, ~ u_r(x-,t) = \frac{x- Y_{r*}(x,t)}{t},\\ 
		& u(x+,t) = \frac{x- Y^*(x,t)}{t}, ~ u(x-,t) = \frac{x- Y_*(x,t)}{t}.
	\end{align}
\end{Prop}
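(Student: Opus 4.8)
The plan is to obtain $u$ as the vanishing-viscosity limit $\e\to 0$ of the explicit Hopf--Cole formula \eqref{hop} and then to read off the one-sided traces from the monotonicity and one-sided continuity recorded in Lemma \ref{lem1}. I will argue only for $u$ (via $F,Y_*,Y^*$); the formulas for $u_l$ and $u_r$ follow verbatim by applying the same reasoning to $F_l,F_r$ and their extremal minimizers, since each of $u,u_l,u_r$ is the $\e\to 0$ limit of \eqref{hop} with the corresponding datum, and the limit $u^\e\to u$ a.e.\ is already available from the discussion following \eqref{hop}.

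First I would carry out the Laplace asymptotics of \eqref{hop}. Fix $t>0$ and $x\in\R$. Since $u_0\in L^\infty$, the term $\int_0^y u_0(z)\,dz$ grows at most linearly, so $y\mapsto F(t,x,y)$ is continuous and coercive; thus the minimum $\m$ is attained on a compact nonempty set contained in $[Y_*(t,x),Y^*(t,x)]$. Subtracting $\m$ from the exponent in both integrals of \eqref{hop}, the localization estimate
\begin{equation*}
\frac{\int_{\{\,\mathrm{dist}(y,[Y_*,Y^*])>\delta\,\}} e^{-(F-\m)/2\e}\,dy}{\int e^{-(F-\m)/2\e}\,dy} \longrightarrow 0 \quad (\e\to 0),
\end{equation*}
valid for every $\delta>0$ because $F-\m$ is continuous and strictly positive off the minimizer set, together with the boundedness of $(x-y)/t$ on the relevant compact set, forces the weighted average $u^\e$ to concentrate on $[Y_*,Y^*]$. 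As $y\mapsto(x-y)/t$ is decreasing, this yields
\begin{equation*}
\frac{x-Y^*(t,x)}{t} \le \liminf_{\e\to 0} u^\e(x,t) \le \limsup_{\e\to 0} u^\e(x,t) \le \frac{x-Y_*(t,x)}{t}.
\end{equation*}
In particular, whenever the minimizer is unique, i.e.\ $Y_*(t,x)=Y^*(t,x)$, the limit exists and equals $(x-Y_*(t,x))/t$.

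Next I would identify the a.e.\ value and then the traces. By parts (ii)--(iv) of Lemma \ref{lem1}, $Y_*(t,\cdot)$ and $Y^*(t,\cdot)$ are nondecreasing and therefore agree except on an at most countable set, so the minimizer is unique for all but countably many $x$. Combining the previous display with $u^\e\to u$ a.e.\ gives $u(x,t)=(x-Y_*(t,x))/t=(x-Y^*(t,x))/t$ for a.e.\ $x$. To extract the left trace I let $x'\uparrow x$ through such uniqueness points: since $Y_*(t,\cdot)$ is nondecreasing and continuous to the left (Lemma \ref{lem1}(ii)), $Y_*(t,x')\uparrow Y_*(t,x)$ and hence $u(x-,t)=(x-Y_*(t,x))/t$. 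Symmetrically, the right-continuity and monotonicity of $Y^*(t,\cdot)$ (Lemma \ref{lem1}(iii)) give $u(x+,t)=(x-Y^*(t,x))/t$; the interlacing $Y^*(t,x_1)\le Y_*(t,x_2)$ for $x_1<x_2$ from Lemma \ref{lem1}(iv) ensures these one-sided limits land exactly on the extremal minimizers rather than on some interior value.

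The main obstacle is the Laplace analysis at points where the minimizer set is a nondegenerate interval, equivalently where $u$ jumps: there one must confirm that the ratio in \eqref{hop} still converges into the closed interval $[(x-Y^*)/t,(x-Y_*)/t]$ and that the exceptional countable set of such $x$ does not obstruct the a.e.\ identification. I expect this to be handled entirely by the localization estimate above, since the exponential weight suppresses everything outside any $\delta$-neighborhood of the minimizer set, so no contribution from non-minimizing $y$ survives and monotonicity confines the traces to the claimed values.
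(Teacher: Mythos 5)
The paper does not prove Proposition \ref{prophopf} at all: it is imported verbatim as Theorem 3 of \cite{hopf} and used as a black box in Section 6, so the only meaningful benchmark is Hopf's original argument --- and your proposal is, in substance, a correct reconstruction of exactly that argument: Laplace-type localization of the ratio \eqref{hop} onto the minimizing set of $F(t,x,\cdot)$, uniqueness of the minimizer for all but countably many $x$ via the monotonicity and interlacing of $Y_*(t,\cdot)$ and $Y^*(t,\cdot)$, identification with $u$ through the a.e.\ convergence $u^\e\to u$, and recovery of the one-sided traces from the one-sided continuity statements of Lemma \ref{lem1}. Two points should be made explicit in a written version. First, in the localization step the weight $(x-y)/t$ is unbounded on $\{\,\mathrm{dist}(y,[Y_*,Y^*])>\delta\,\}$, so the quadratic coercivity of $F$ (which you invoke only to get attainment of the minimum) must also be used to dominate the linearly growing weight in the numerator of \eqref{hop}; the unweighted ratio you display is not by itself enough. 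Second, passing from the a.e.\ identity $u(\cdot,t)=(\cdot-Y_*(t,\cdot))/t$ to the pointwise traces $u(x\pm,t)$ presupposes that those traces exist; either invoke the classical one-sided regularity of entropy solutions for $t>0$, or observe that monotonicity of $Y_*(t,\cdot)$ already gives the right-hand side one-sided limits everywhere, after which left-continuity of $Y_*$, right-continuity of $Y^*$, and the interlacing in Lemma \ref{lem1}(iv) identify these limits with $(x-Y_*(t,x))/t$ and $(x-Y^*(t,x))/t$, exactly as you indicate. With these details filled in, your proof is complete and coincides with the standard (Hopf's) one.
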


\begin{proof}[Proof of Theorem \ref{thmshock}]
	Since $ w_0(x) $ is $ L^{\infty} $ bounded, there exist two constant numbers $ \alpha < \beta, $ such that
	$$\alpha < w_0(x) <\beta, \quad \quad a.e.~ x.$$
	
	Now we compare $$ m_-(t,x)=\mln  \text{~~ with ~~} m_+(t,x)= \mrp. $$

	\begin{enumerate}
		\item[\textbf{Case1.}]
		If $~\dfrac{x}{t} < s+\alpha, $ where $s=\dfrac{\ovl{u}_l+\ovl{u}_r}{2}. $
		Then 
		\begin{align}
			m_-(t,x)& \leq \min_{y\leq 0} \Big(  \frac{(y-x)^2}{2t}+(\ovl{u}_l+\alpha)y \Big) \label{fyl}  \\
			& = \min_{y\leq 0} \Big(\frac{1}{2t} \{y-[x-(\ovl{u}_l+\alpha)t]\}^2+(\ovl{u}_l+\alpha)x-\frac{(\ovl{u}_l+\alpha)^2}{2}t \Big)  \notag \\
			& = (\ovl{u}_l+\alpha)x-\frac{(\ovl{u}_l+\alpha)^2}{2}t ; \notag\\
			m_+(t,x) & \geq \min_{y\geq 0} \Big(  \frac{(y-x)^2}{2t}+(\ovl{u}_r+\alpha)y \Big) \label{fyr}\\
			& = \min_{y\leq 0} \Big(\frac{1}{2t} \{y-[x-(\ovl{u}_r+\alpha)t]\}^2+(\ovl{u}_r+\alpha)x-\frac{(\ovl{u}_r+\alpha)^2}{2}t \Big) \notag \\
			& \geq (\ovl{u}_r+\alpha)x-\frac{(\ovl{u}_r+\alpha)^2}{2}t. \notag
		\end{align}
		Using \eqref{fyl} and \eqref{fyr}, one has 
		\begin{align*}
			m_+(t,x) - m_-(t,x) & \geq  (\ovl{u}_r+\alpha)x-\frac{(\ovl{u}_r+\alpha)^2}{2}t - (\ovl{u}_l+\alpha)x + \frac{(\ovl{u}_l+\alpha)^2}{2}t \\
			&	= (\ovl{u}_l - \ovl{u}_r) \big((s+\alpha)t - x\big) >0,
		\end{align*}
		so in this case, 
		\begin{equation}\label{neg}
			m_+(t,x) > m_-(t,x).
		\end{equation}
		
		\item[\textbf{Case2.}]
		If $~\dfrac{x}{t} >s+\beta,$ by similar argument as in Case 1, one can prove that 
		\begin{equation}\label{pos}
			m_+(t,x) < m_-(t,x).
		\end{equation}\\ 
	\end{enumerate}

	It then follows from \eqref{neg}, \eqref{pos}, and the continuity of $	m_{\pm}(t,x) $ that there must exist a closed nonempty set for each $t>0, $
	\begin{equation}\label{set}
		\X(t)\de\{ x: m_-(t,x)=m_+(t,x) \} \subset [~(s+\alpha) t, (s+\beta) t~]
	\end{equation}
	Define the minimum value and the maximum value in $\X(t) $ as:
	\begin{equation}\label{Xmm}
		X_-(t) \de \min\{x: x\in \X(t)\}, \quad  X_+(t) \de \max\{x: x\in \X(t)\}
	\end{equation}
	Since $\X(t) $ is closed, then $X_{\pm}(t) \in \X(t).$\\
	
	Next, we prove some properties about $ \X(t), X_-(t)$ and $ X_+(t). $
	\begin{Lem}\label{lem2}
		For any $ x \in \X(t), $ it holds that 
		\begin{align}
			& Y_*(t,x)=Y_{l*}(t,x) \leq 0,~ u(x-,t)=u_l(x-,t), \label{Xbn}\\
			& Y^*(t,x)=Y_r^{*}(t,x) \geq 0,~ u(x+,t)=u_l(x+,t),\label{Xtp}			
		\end{align}
	\end{Lem}
	\begin{proof}[Proof of Lemma \ref{lem2}]
	It follows from the definition of $\X(t), $ that $$m_-(t,x)=m_+(t,x), $$	 
	\begin{equation}\label{lre}
		 i.e.\quad \quad \min_{y\leq 0} F_l(t,x,y)=\min_{y\geq 0} F_r(t,x,y).
		\end{equation}
		This together with the definitions of $F(t,x,y) $ in \eqref{f} and $Y_*(t,x) $ in \eqref{yb}, implies that $$ Y_*(t,x) \leq 0.$$
		Due to \eqref{fl}, \eqref{fr}, and $\ovl{u}_r < \ovl{u}_l, $ it's easy to verify that 
		$$ \min_{y\geq 0} F_l(t,x,y) \geq \min_{y\geq 0} F_r(t,x,y) $$
		Then	it holds that
		\begin{equation}\label{inel}
			\min_{y\geq 0} F_l(t,x,y) \geq \min_{y\leq 0} F_l(t,x,y)
		\end{equation}
		which implies that $$Y_{l*}(t,x) \leq 0. $$ 
		From above, we have proved that the minimum values $$\min_{y \in \R} F(t,x,y) \quad \text{and} \quad \min_{y \in \R} F_l(t,x,y)$$ can be achieved in $\{y\leq 0\},$ where $ F(t,x,y) = F_l(t,x,y). $ So it follows that $ Y_*(t,x)=Y_{l*}(t,x), $ and by Proposition \ref{prophopf}, \eqref{Xbn} can be verified easily.\\
		The proof of \eqref{Xtp} is similar.
	\end{proof}

	\begin{Lem}\label{lem3} The following properties hold:
		\begin{enumerate}
			\item[(i).] If $  x<X_-(t), $ then
			\begin{align*}
				& Y_*(t,x)=Y_{l*}(t,x), Y^*(t,x)=Y_l^*(t,x)<0,\\
				 \text{ and hence, } &  u(x,t)=u_l(x,t).
			\end{align*}
			\item[(ii).] If $ x>X_+(t), $ then
			\begin{align*}
				 & Y^*(t,x)=Y_r^*(t,x), Y_*(t,x)=Y_{r*}(t,x)>0,\\
				\text{ and hence, } &  u(x,t)=u_r(x,t).
			\end{align*}
			\item[(iii).] If $ X_-(t) <X_+(t),  $ then $ \forall~ x \in (X_-(t), X_+(t)), $
			\begin{align*}
				& Y_{l*}(t,x)=Y_l^*(t,x) =Y_{r*}(t,x)=Y_r^*(t,x)=Y_*(t,x)=Y^*(t,x)=0,\\
				 \text{ and hence, }& u(x,t)=u_l (x,t)=u_r(x,t)=\frac{x}{t}.
			\end{align*}			
		\end{enumerate}
	\end{Lem}
	\begin{proof}[Proof of Lemma \ref{lem3}]  		$ $\\
		\begin{enumerate}
			\item[(i).] It follows from Lemma \ref{lem2} that 
			$ Y_*(t,X_-(t))=Y_{l*}(t,X_-(t)) \leq 0. $
			Thus by  Lemma \ref{lem1}.(iv) that, if $x<X_-(t), $  then $ Y^*(t,x)\leq Y_*(t,X_-(t)) \leq 0. $ \\
			If $ Y^*(t,x)=0, $ by the definition of $ Y^* $ in \eqref{yt}, it holds that 
			\begin{align*}
				& \min_{y \in \R } F(t,x,y)= \min_{y \leq 0} F_l(t,x,y) = F_l(t,x,0), \\
				 \text{and~ }& \min_{y \in \R} F(t,x,y)<F_r(t,x,z), ~\forall~ z>0. 		
			\end{align*}		 
			Thus $$ \min_{y \leq 0} F_l(t,x,y) = F_l(t,x,0) = F_r(t,x,0) = \min_{y \geq 0} F_r(t,x,y),$$
			which implies that $x \in \X(t) $ defined by \eqref{set}. But $x<X_-(t) $ contradicts with the definition of $X_-(t)$ in \eqref{Xmm}, so it holds that
			\begin{equation}\label{ineq1}
				Y^*(t,x)<0.
			\end{equation}
			By \eqref{ineq1}, there must hold 
			$$ \min_{y \leq 0} F_l(t,x,y) < \min_{y \geq 0} F_r(t,x,y), $$
			then by $$ \min_{y \geq 0} F_r(t,x,y) \leq \min_{y \geq 0} F_l(t,x,y), $$
			one has 
			$$ \min_{y \leq 0} F_l(t,x,y) < \min_{y \geq 0} F_l(t,x,y), $$
			so it holds that
			\begin{equation}\label{ineq2}
				Y_l^*(t,x)<0.
			\end{equation}\\
			By \eqref{ineq1} and \eqref{ineq2}, the minimum values of $ F(t,x,y) $ and $ F_l(t,x,y) $ can only be achieved in $ \{y<0\}, $ where $ F(t,x,y) = F_l(t,x,y), $ thus it holds that $$ Y_l^*(t,x)=Y^*(t,x),~ Y_{l*}(t,x)=Y_*(t,x). $$\\
			
			\item[(ii).] The proof is similar to (i).\\
			
			\item[(iii).] By Lemma \ref{lem2}, one has
			\begin{align*}
			& Y_*(t,X_+(t))=Y_{l*}(t,X_+(t)) \leq 0,\\
			& Y^*(t,X_-(t))=Y_r^{*}(t,X_-(t)) \geq 0.			
			\end{align*}
			Thus if $X_-(t)<x<X_+(t), $ then by Lemma \ref{lem1}.(iv),
			\begin{equation*}
			0\leq Y^*(t,X_-(t)) \leq Y_*(t,x) \leq Y^*(t,x) \leq Y_*(t,X_+(t)) \leq 0
			\end{equation*}
			which implies that 
			\begin{equation}\label{eq2}
			Y_*(t,x)=Y^*(t,x)=0, \quad \forall~ x \in (X_-(t), X_+(t)).
			\end{equation}
			It implies that
			$$\min_{y\in \R} F(t,x,y)=F(t,x,0), $$
			 and 
			\begin{equation*}
			\begin{cases}
			F_l(t,x,y)>F(t,x,0), \quad \forall~ y<0,\\
			F_r(t,x,y)>F(t,x,0), \quad \forall~ y>0,
			\end{cases}
			\end{equation*}
			which implies that
			\begin{equation*}
			\min_{y\leq 0} F_l(t,x,y) =F_l(t,x,0)=F_r(t,x,0)=\min_{y \geq 0} F_r(t,x,y) =\frac{x^2}{2t}.
			\end{equation*}
			Thus, $x \in \X(t) $. \\
			Then by Lemma \ref{lem2} again and by \eqref{eq2}, one has that $Y_{l*}(t,x) =Y_*(t,x)=0.$\\
			Then one has
			$$ 0=Y_{l*}(t,x) \leq Y_l^*(t,x) \leq Y_{l*}(t,X_+(t)) \leq 0, $$
			so $ Y_l^*(t,x)=0.$\\
			Similarly, one can also prove that $$ Y_r^*(t,x)=Y_{r*}(t,x)=0, \text{~for ~} X_-(t)<x<X_+(t). $$
			Due to Proposition \ref{prophopf}, the rest of the Lemma follows easily, which completes the proof.
		\end{enumerate}
	\end{proof}

	To prove that after a finite time $ T_S $, $ X_-(t) = X_+(t). $ it's easy by using Lemma \ref{lem3} (iii) and \eqref{lb2}.
	

	Next, we prove that when $t>T_S,$ the unique point $X(t) $ in $ \X(t) $ is Lipschitz with respect to $ t. $\\
	
	In fact, it is well-known that $u(x,t) \in Lip~((0,+\infty),L^1_{loc}). $ Thus there exists a positive constant $ C $ such that for any $t>\tau>T_S, $ it holds that
	\begin{equation}\label{intlip}
		\int_{X(t)}^{X(\tau)} |u(x,t)-u(x,\tau)| ~dx \leq C|t-\tau|,
	\end{equation}
	here one has assumed that  $X(\tau)>X(t) $ without loss of generality.
	
	When $X(t)<x<X(\tau), $ by Lemma \ref{lem3}, 
	$$ u(x,t)=u_r(x,t),\quad u(x,\tau) = u_l(x,\tau). $$
	Then \eqref{intlip} yields
	$$ C|t-\tau| \geq \int_{X(t)}^{X(\tau)} (u_l(x,\tau)-u_r(x,t))~dx \geq \Big(\ovl{u}_l-\ovl{u}_r-\dfrac{2C}{T_S}\Big)~|X(t)-X(\tau)|, $$ i.e.
	\begin{equation}\label{Xlip}
		|X(t)-X(\tau)|\leq C(p,\ovl{u}_l,\ovl{u}_r)|t-\tau|, \quad \forall~ t>\tau>T_S.
	\end{equation}
	Since for $t>T_S$ large enough, $X(t) $ is Lipschitz and it is a discontinuous curve of the entropy solution $ u $, so $X(t) $ is actually a shock when $t>T_S. $ \\	
	Then by Lemma \ref{lem2} and Lemma \ref{lem3}.(i), (ii), one can finish the proof of \eqref{glue} .  	
\end{proof} 

\vspace{0.8cm}

\appendix
\section{}
For any $L^{\infty}$ periodic initial data $u_0$ with the average $\ovl{u}$ defined as \eqref{avg}, before showing the theorem of the optimal decay of the corresponding entropy solution $u$, we define two functions $g$ and $z(t)$ associated with $f$ and $\ovl{u}$ as follows:

By changing variables if necessary, we can assume without loss of generality that $$f(\ovl{u})= f'(\ovl{u})=0.$$ Since $f$ is strictly convex, $f'$ is monotonically increasing, so one can define 
\begin{equation}\label{dg}
	g(v)\de \int_0^v [(f')^{-1}(s)-\ovl{u}]~ds,
\end{equation}
here $(f')^{-1}$ represents the inverse function of $f'$.
Therefore, it follows that $g\in C^2 $ and satisfies
\begin{align*}
	& g'(v)=(f')^{-1}(v)-\ovl{u}, \quad g(0)=g'(0)=0,\\
	& g''(v)=1/f''\Big((f')^{-1}(v)\Big)>0
\end{align*} 
which implies that
\begin{equation}\label{ineq3}
	g(0)-g(-\frac{p}{t})<0, \quad  g(\frac{p}{t})-g(0)>0.
\end{equation}

While for any fixed $t>0$, $g(\dfrac{z}{t})-g(\dfrac{z-p}{t})$ is strictly increasing with respect to $z$, then by \eqref{ineq3}, there exists a unique point $z(t) \in (0,p)$ such that
\begin{equation}\label{dz}
g(\frac{z(t)}{t})=g(\frac{z(t)-p}{t})
\end{equation}
And by implicit function theorem, $z(t) \in C^2((0,+\infty))$. 

\begin{Thm} \label{thmper}
	For any periodic initial data $u_0(x)\in L^{\infty}(\R)$ with period $p$, the entropy solution $u(x,t)$ to \eqref{equ1} is also space-periodic of period $p$ for any $t>0$, and it satisfies
	\begin{equation}\label{lb1}
	(f')^{-1}\Big(\frac{z(t)-p}{t}\Big)\leq u(x,t) \leq (f')^{-1}\Big(\frac{z(t)}{t}\Big),~~\forall~ t>0, ~a.e.~x,
	\end{equation} 
	where $\ovl{u}$ is defined as \eqref{avg}, $(f')^{-1}$ is the inverse function of $f'$ and $z(t) \in (0,p)$ is defined as \eqref{dz}.
	More precisely, by the definition of $z(t)$, \eqref{lb1} can imply that
	\begin{equation}\label{lb11}
	\begin{aligned}
	& z(t)=\frac{p}{2}+o(1), \quad \text{as~} t \rightarrow +\infty,\\
	& |u(x,t)-\ovl{u}|\leq \frac{p}{2f''(\ovl{u})t}+o(\frac{1}{t}), \quad \text{as~} t \rightarrow +\infty.
	\end{aligned}	
	\end{equation}
	Furthermore, there exist periodic initial data such that for any $t$ larger than a constant $T_P>0$,
	\begin{equation}\label{lb12}
	\begin{aligned}
	&\inf_{x\in \mathbb{R}} u(x,t) = (f')^{-1}(\frac{z(t)-p}{t}),\\
	&\sup_{x\in \mathbb{R}} u(x,t) = (f')^{-1}(\frac{z(t)}{t}).
	\end{aligned}
	\end{equation}
\end{Thm}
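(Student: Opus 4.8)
The plan is to deduce everything from a Lax--Oleinik representation of the entropy solution, generalizing the Hopf formula \eqref{hop} used in Section 6. Writing $U_0(y)\de\int_0^y u_0(z)\,dz$ and letting $f^*$ be the Legendre transform of $f$, for a.e. $x$ and every $t>0$ one has $u(x,t)=(f')^{-1}\big(\tfrac{x-Y(x,t)}{t}\big)$, where $Y(x,t)$ minimizes
$$\Phi(y)\de U_0(y)+t\,f^*\Big(\frac{x-y}{t}\Big),$$
the two extremal minimizers producing $u(x\mp,t)$ exactly as $Y_*,Y^*$ do in Proposition \ref{prophopf} for the Burgers case. The normalization $f(\ovl u)=f'(\ovl u)=0$ gives $f^*(0)=0$ and $(f^*)'=(f')^{-1}$, so the function $g$ of \eqref{dg} satisfies $g(v)=f^*(v)-\ovl u\,v$; this is the bridge between $\Phi$ and the scalar $z(t)$ of \eqref{dz}. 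The space-periodicity of $u(\cdot,t)$ is already supplied by the periodicity theorem recalled at the start of Section 2, so it remains to prove the bounds \eqref{lb1}, the asymptotics \eqref{lb11}, and the sharpness \eqref{lb12}.

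The key step is a balance identity. Using $U_0(y+p)=U_0(y)+p\,\ovl u$ together with $f^*=g+\ovl u\,(\cdot)$, a direct computation yields, with $d\de x-y$,
$$\Phi(y+p)-\Phi(y)=t\Big[g\big(\tfrac{d-p}{t}\big)-g\big(\tfrac{d}{t}\big)\Big]\de t\,h(d).$$
Since $g'=(f')^{-1}-\ovl u$ is strictly increasing, $h$ is strictly decreasing, and by \eqref{dz} one has $h(z(t))=0$; hence $\Phi(y+p)<\Phi(y)$ iff $d>z(t)$ and $\Phi(y+p)>\Phi(y)$ iff $d<z(t)$. If $Y$ is a global minimizer, then $\Phi(Y)\le\Phi(Y+p)$ and $\Phi(Y)\le\Phi(Y-p)$ force $z(t)-p\le x-Y\le z(t)$. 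Substituting into $u=(f')^{-1}(\tfrac{x-Y}{t})$ and using monotonicity of $(f')^{-1}$ gives \eqref{lb1} at once. The asymptotics \eqref{lb11} then follow by Taylor-expanding $g$ at $0$: since $g(0)=g'(0)=0$ and $g''(0)=1/f''(\ovl u)$, the relation $g(z/t)=g((z-p)/t)$ gives $z^2=(z-p)^2+o(1)$, i.e. $z(t)=p/2+o(1)$, while $(f')^{-1}(z/t)-\ovl u=g'(z/t)=\tfrac{p}{2f''(\ovl u)t}+o(t^{-1})$, and similarly for the lower bound.

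For the sharpness \eqref{lb12} I would first record a structural consequence of the balance identity: at any shock $x_s$ whose two extremal minimizers $Y_-<Y_+$ satisfy $Y_+-Y_-=p$, the inequalities $x_s-Y_-\le z(t)$, $x_s-Y_+\ge z(t)-p$, together with $(x_s-Y_-)-(x_s-Y_+)=p$, force $x_s-Y_-=z(t)$ and $x_s-Y_+=z(t)-p$; thus $u(x_s-,t)$ and $u(x_s+,t)$ realize the upper and lower bounds of \eqref{lb1} simultaneously. It then suffices to exhibit a datum for which every period carries such a ``full-jump'' shock for all large $t$. I would take the two-constant datum \eqref{2constants}, for which $V(y)\de\int_0^y(u_0-\ovl u)$ is a tent function with a single strict downward corner per period at the point $\ovl x$ of \eqref{minpoint}. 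Changing variables $d=x-y$ turns the problem into minimizing $\Psi(d)\de V(x-d)+t\,g(d/t)$, a periodic-plus-convex functional; the corner pins the minimizer exactly at $x-d=\ovl x+Np$ whenever $g'(d/t)$ lies in the corner's slope interval $[V'(\ovl x-),V'(\ovl x+)]$, which holds for every $d\in[z(t)-p,z(t)]$ once $t$ is large (as $d/t\to0$). Hence for $t>T_P$ each period consists of a single rarefaction fan issuing from $(\ovl x+Np,0)$ and one shock across which the minimizer jumps by exactly $p$; since for this datum $u(\cdot,t)$ increases between consecutive shocks, the shock values are the global sup and inf, giving \eqref{lb12}.

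The main obstacle is this last step. The bounds \eqref{lb1} and their asymptotics are soft once the balance identity is in hand, but attaining the bounds requires controlling the \emph{global} (not merely local) minimizers of $\Psi$ for every large $t$ and proving the jump is \emph{exactly} $p$ rather than asymptotically $p$. This is where the sharp-corner structure of $V$ and a quantitative pinning estimate --- ruling out any competing corner as a global minimizer and showing the two central corners tie precisely at the shock --- must be argued with care.
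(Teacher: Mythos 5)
Your proposal is correct in substance, but it takes a genuinely different route from the paper's proof. The paper never leaves the generalized-characteristics framework: the bounds \eqref{lb1} are proved by contradiction, comparing the extremal backward characteristic emanating from a hypothetical point where a bound fails with the divides $x=Np$ of Proposition \ref{propdiper}, then integrating the resulting one-sided bounds on $u(\cdot,t)$ over one period against the conservation identity $\int_0^p(u-\ovl{u})\,dy=0$, with the convexity of $g$ producing the contradiction; sharpness \eqref{lb12} is obtained for the same datum \eqref{2constants} by showing that for $t>T_P$ the backward characteristics from the shock $\zeta(t)$ pin at $(0,0)$ and $(p,0)$, so the solution is two exact rarefaction fans glued at one shock, and conservation then identifies $\zeta(t)=z(t)$. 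You instead pass to the Lax--Oleinik functional and prove the balance identity $\Phi(y+p)-\Phi(y)=t\,[\,g((d-p)/t)-g(d/t)\,]=t\,h(d)$, $d=x-y$, which confines every global minimizer to the window $x-Y\in[z(t)-p,z(t)]$ of length exactly $p$ and yields \eqref{lb1} at once; it also exposes the mechanism of optimality (a shock saturates both bounds if and only if its extremal minimizers differ by exactly $p$). Your route is shorter and more structural; the paper's route avoids having to justify the variational representation for a general $C^2$ strictly convex flux, which need not be superlinear, so $f^*$ is infinite off the closure of the range of $f'$, and the correspondence between extremal minimizers and $u(x\mp,t)$ --- which you assert by analogy with Proposition \ref{prophopf} --- must either be quoted from Chapter 11 of \cite{Dafe} or obtained after modifying $f$ outside the invariant interval $[\inf u_0,\sup u_0]$; this is the one step you should make explicit. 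Finally, the obstacle you flag for \eqref{lb12} is smaller than you fear, and your own identity closes it: global minimizers lie in the length-$p$ window; inside that window, interior critical points of $\Psi$ are impossible once $t>\max\{p/f'(\ovl{u}+m_1),\,p/(-f'(\ovl{u}-m_2))\}$, since they would force $x-y=t\,f'(\ovl{u}+m_1)$ or $x-y=t\,f'(\ovl{u}-m_2)$ and hence $|x-y|>p$; so every global minimizer sits at a corner $Np$, the window contains two corners only when $x=z(t)+Np$, and there the two corners tie exactly because $h(z(t))=0$ --- this is your full-jump shock, with the same threshold $T_P$ as the paper's.
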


\vspace{3cm}

By a translation of $x$-axis, one can also assume in \eqref{defa} that the integral minimal point $a=0$, since the equation \eqref{equ1} is invariant under this translation. Thus by Proposition \ref{propdiper} and $f'(\ovl{u})=0 $, it holds that for $N \in \mathbb{Z}$, the straight lines $x=Np$ are all divides of the periodic entropy solution $u(x,t)$.

Then since $u(x,t)$ is space-periodic of period $p$ at any time, we can just focus on the region between two divides $x=0$ and $x=p$.

\begin{proof}[Proof of Theorem \ref{thmper}]
	\textbf{Step1. } We first prove that the bounds for $u(x,t)$ in \eqref{lb1} can be attained, i.e. there exists an initial data $u_0(x)$ with the average $\ovl{u}$, s.t. \eqref{lb12} holds.
	
	Let $m_1, m_2>0$ be any constants and define $u_0(x)$ in one period $(0,p)$ as 	\begin{equation}\label{2constants}
	u_0(x)=\begin{cases}
	m_1+\ovl{u} & \text{if}~0<x<\frac{m_2}{m_1+m_2}p,\\
	-m_2+\ovl{u} & \text{if}~ \frac{m_2}{m_1+m_2}p<x<p.
	\end{cases}
	\end{equation}
	This function is piecewise constant and the average is $\ovl{u}$. Thus by the assumption $f'(\ovl{u})=0, f''>0$, we have $f'(-m_2+\ovl{u})<0<f'(m_1+\ovl{u})$. Thus it is easy to verify that the forward generalized characteristic issuing from the point $(\frac{m_2}{m_1+m_2}p,0)$ of $x$-axis is unique, denoted by $x=\zeta(t)$, which is a shock for short time.
	
	\textbf{Claim}: After a finite time 
	$$T_P\de \max\{\frac{p}{f'(m_1+\ovl{u})},\frac{p}{-f'(-m_2+\ovl{u})}\},$$
	the minimal (resp.~ maximal) backward characteristic emanating from $(\zeta(t),t)$ intersects with $x$-axis at the origin (resp.~ $(p,0)$).
	See Figure \ref{2rarefig}.
	
	\begin{figure}[h]
		\includegraphics[scale=0.35]{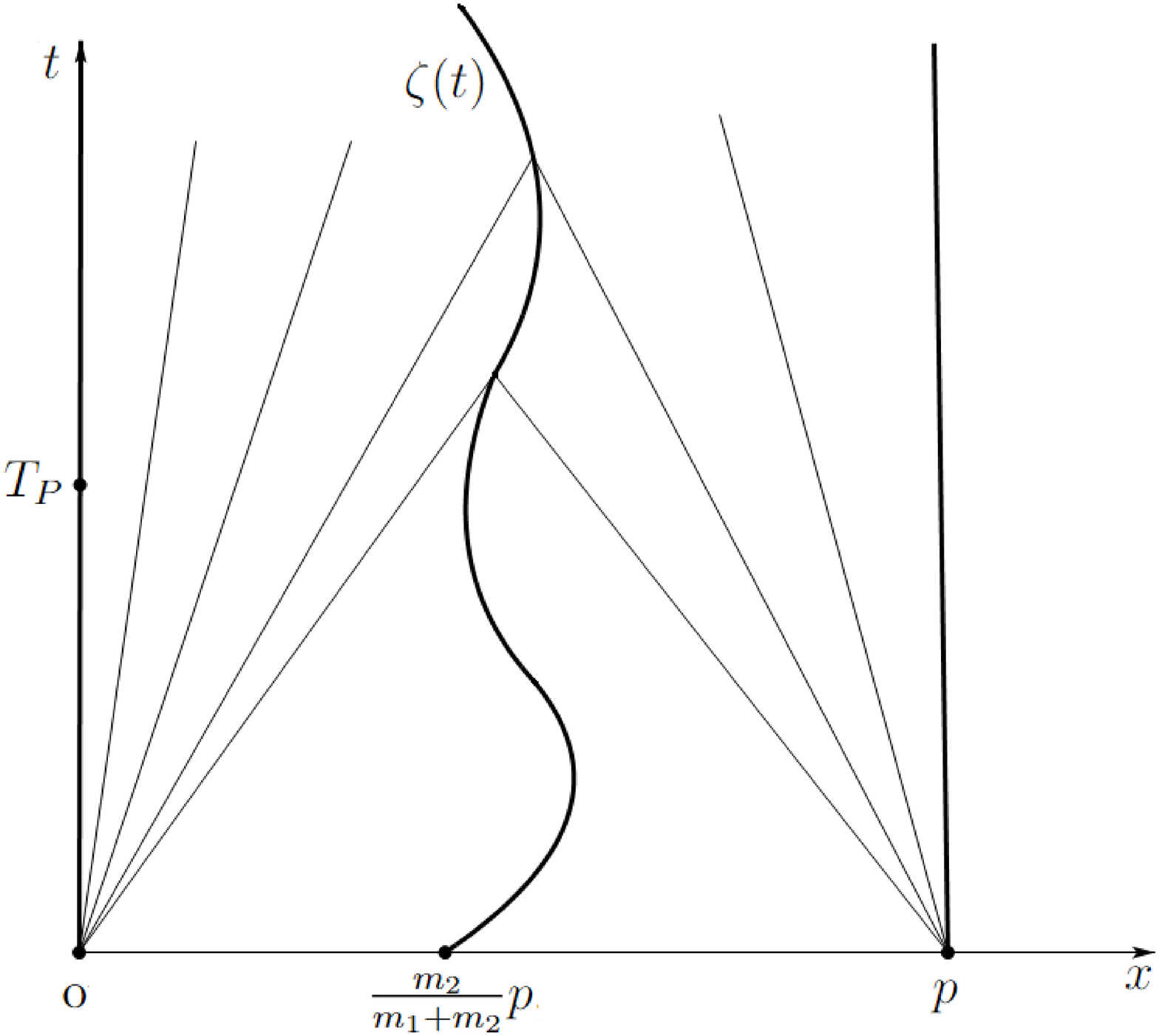} 
		\caption{}
		\label{2rarefig}
	\end{figure}
	
	Indeed, for fixed $\ovl{t}>T_P$, emanating from $(\zeta(\ovl{t}), \ovl{t}),$ the minimal backward characteristic $\xi_-(t)$ of $u$
	$$\xi_-(t)\de f'\Big(u(\zeta(\ovl{t})-, \ovl{t})\Big)(t-\ovl{t})+\zeta(\ovl{t}) $$
	cannot intersect with the divide $x=0$ at $t>0$, and also $ \xi_-(t) \leq \zeta(t) $ since $ \xi_-(t) $ is the minimal generalized characteristic, thus we have $0 \leq \xi_-(0)\leq \zeta(0)=\dfrac{m_2 p}{m_1+m_2} $ and $f'(u(\zeta(\ovl{t})-, \ovl{t}))\leq p/\ovl{t}. $ Then by \eqref{genpro}, it holds that
	\begin{equation}\label{exine}
	u_0(\xi_-(0)-)\leq u(\zeta(\ovl{t})-,\ovl{t}) \leq (f')^{-1}(\frac{p}{\ovl{t}})<(f')^{-1}(\frac{p}{T_P})\leq m_1+\ovl{u}.
	\end{equation}
	If $0<\xi_-(0)\leq \dfrac{m_2 p}{m_1+m_2}$, then  by the definition of $u_0$ in \eqref{2constants}, $u_0(\xi_-(0)-)=m_1+\ovl{u}$, which contradicts with \eqref{exine}. So $\xi_-(0)=0 $ must hold.
	The arguments for the maximal backward characteristic is similar, which proves the Claim.
	
	For $t>T_p$, similar to the proof of \eqref{exine}, one can verify also that
	\begin{equation}\label{2rare}
	\begin{aligned}
	& \text{~if~ } 0<x<\zeta(t),\quad u(x-,t)<m_1+\ovl{u}\\
	& \text{~if~ } \zeta(t)<x<p,\quad u(x+,t)>-m_2+\ovl{u}.	
	\end{aligned}
	\end{equation}
	Thus \eqref{2rare} implies that for $t>T_p$, if $0<x<\zeta(t)$, the backward generalized characteristic emanating from $(x,t)$ can only intersect with $x$-axis at origin, which means that the backward generalized characteristic (a real characteristic) is unique, so $u(x,t)=(f')^{-1}(\frac{x}{t})$; and respectively, if $\zeta(t)<x<p$, the characteristic can only intersect with $x$-axis at $(p,0)$, so $u(x,t)=(f')^{-1}(\frac{x-p}{t}) $. See Figure \ref{2rarefig}.
	
	Then it follows
	\begin{equation*}
	\begin{aligned}
	0&=\int_0^p [u(y,t)-\ovl{u}] \ dy\\
	&=\int_0^{\zeta(t)} [(f')^{-1}(\frac{y}{t})-\ovl{u}] \ dy+\int_{\zeta(t)}^p [(f')^{-1}(\frac{y-p}{t})-\ovl{u}] \ dy\\
	&=\Big[g(\frac{\zeta(t)}{t})-g(\frac{\zeta(t)-p}{t})\Big]t.
	\end{aligned}
	\end{equation*}
	Therefore, after $t>T_P$, $\zeta(t)=z(t) \in (0,p)$, and  $ u(z(t)-,t)=(f')^{-1}(\frac{z(t)}{t})$ and $ u(z(t)+,t)=(f')^{-1}(\frac{z(t)-p}{t})$, which achieves \eqref{lb12}.
	
	\vspace{0.5cm}
	
	\textbf{Step2. } We prove \eqref{lb1} by a contradiction argument. 
	\begin{enumerate}
		\item[1)] Suppose that there exist $x\in(0,p)$ and $t>0$ such that $u(x-,t)>(f')^{-1}(\frac{z(t)}{t})$ and the minimal backward characteristic emanating from $(x,t)$ is $\xi(\tau), \tau\in[0,t]$.
		
		Denote $\lambda\de \xi(0)$ and $\mu\de x-\lambda-z(t)$, and thus $u(x-,t)=(f')^{-1}(\frac{x-\lambda}{t})$. $u(x-,t)>(f')^{-1}(\frac{z(t)}{t})$ implies $\mu>0$, while $x\in(0,p)$ implies $0\leq \lambda < p-z(t)-\mu$.
		
		Note that when $0<y<x$, the maximal backward characteristics emanating from $(y,t)$ cannot cross $\xi(\tau)$, thus $u(y+,t)\geq (f')^{-1}(\frac{y-\lambda}{t})$; 
		when $x<y<p$, the maximal backward characteristics emanating from $(y,t)$ cannot cross $x=p$, thus $u(y+,t) \geq (f')^{-1}(\frac{y-p}{t})$.
		Therefore, one has
		\begin{equation}\label{0geq}
		\begin{aligned}
		0&=\int_0^x [u(y,t)-\ovl{u}] \ dy+\int_x^p [u(y,t)-\ovl{u}] \ dy\\
		&\geq\int_0^x [(f')^{-1}(\frac{y-\lambda}{t})-\ovl{u}] \ dy+\int_x^p [(f')^{-1}(\frac{y-p}{t})-\ovl{u}] \ dy\\
		&=\Big[g(\frac{x-\lambda}{t})-g(-\frac{\lambda}{t})-g(\frac{x-p}{t})\Big]t\\
		&=\Big[g(\frac{z(t)+\mu}{t})-g(-\frac{\lambda}{t})-g(\frac{z(t)+\mu-p}{t}+\frac{\lambda}{t})\Big]t.
		\end{aligned}
		\end{equation}
		As $g$ is convex and $p-z(t)-\mu>\lambda \geq 0$, one has
		\begin{equation}\label{2ineq}
		\begin{aligned}
		g(-\frac{\lambda}{t})
		&\leq \frac{\lambda}{p-z(t)-\mu}~g(\frac{z(t)+\mu-p}{t})+\Big(1-\frac{\lambda}{p-z(t)+\mu}\Big)~g(0)\\
		&=\frac{\lambda}{p-z(t)-\mu}~g(\frac{z(t)+\mu-p}{t})\\
		g(\frac{z(t)+\mu-p}{t}+\frac{\lambda}{t})
		&\leq \Big(1-\frac{\lambda}{p-z(t)-\mu}\Big)~g(\frac{z(t)+\mu-p}{t})+\frac{\lambda}{p-z(t)+\mu}~g(0)\\
		&=\Big(1-\frac{\lambda}{p-z(t)-\mu}\Big)~g(\frac{z(t)+\mu-p}{t})
		\end{aligned}
		\end{equation}
		Therefore, taking \eqref{2ineq} into \eqref{0geq} leads to
		\begin{equation*}
		g(\frac{z(t)+\mu}{t})-g(\frac{z(t)+\mu-p}{t}) \leq 0.
		\end{equation*}		
		But by the definition of $z(t)$ in \eqref{dz} and $g(\frac{z}{t})-g(\frac{z-p}{t})$ is strictly increasing with respect to $z$, this is a contradiction with $\mu>0$. \\
		Hence for any $ x \in (0,p),~ t>0, $ it holds that
		$$ u(x-,t) \leq (f')^{-1}(\frac{z(t)}{t}). $$
		
		\item[2)] Suppose that there exist $x\in(0,p)$ and $t>0$ such that $u(x+,t)<(f')^{-1}(\frac{z(t)-p}{t})$ and the maximal backward characteristic emanating from $(x,t)$ is $\xi(\tau), \tau\in[0,t]$.
		
		Denote $\lambda\de \xi(0)$ and $-\mu\de x-\lambda-z(t)+p$, and thus $u(x+,t)=(f')^{-1}(\frac{x-\lambda}{t})$. $u(x+,t)<(f')^{-1}(\frac{z(t)-p}{t})$ implies $\mu>0$, while $x\in(0,p)$ implies $0\leq p- \lambda < z(t)-\mu$. 
		
		Note that when $0<y<x$, the maximal backward characteristics emanating from $(y,t)$ cannot cross $x=0$, thus $u(y+,t)\leq (f')^{-1}(\frac{y}{t})$; 
		when $x<y<p$, the maximal backward characteristics emanating from $(y,t)$ cannot cross $\xi(\tau)$, thus $u(y+,t)\leq (f')^{-1}(\frac{y-\lambda}{t})$.
		
		Therefore, in the similar way as in 1), one can obtain
		\begin{equation*}
		\begin{aligned}
		0&=\int_0^x [u(y,t)-\ovl{u}] \ dy+\int_x^p [u(y,t)-\ovl{u}] \ dy\\
		&\leq\int_0^x [(f')^{-1}(\frac{y}{t})-\ovl{u}] \ dy+\int_x^p [(f')^{-1}(\frac{y-\lambda}{t})-\ovl{u}] \ dy\\
		&=\Big[g(\frac{x}{t})+g(\frac{p-\lambda}{t})-g(\frac{x-\lambda}{t})\Big]t\\
		&=\Big[g(\frac{z(t)-\mu}{t}-\frac{p-\lambda}{t})+g(\frac{p-\lambda}{t})-g(\frac{x-\lambda}{t})\Big]t\\
		&\leq \Big[g(\frac{z(t)-\mu}{t})+g(0)-g(\frac{x-\lambda}{t})\Big]t \quad \quad \quad (\text{similar to proof of \eqref{2ineq}})    \\
		&= \Big[g(\frac{z(t)-\mu}{t})-g(\frac{z(t)-p-\mu}{t})\Big]t <0, \quad \quad \quad (\mu>0)
		\end{aligned}
		\end{equation*}
		which is also a contradiction.\\
		Hence for any $ x \in (0,p),~ t>0, $ it holds that
		$$ u(x+,t) \geq (f')^{-1}(\frac{z(t)-p}{t}). $$
	\end{enumerate}
	Combining 1), 2) and using the entropy condition $ u(x-,t) \geq u(x+,t), $ one can prove \eqref{lb1}.
	\vspace{0.5cm}
	
	\textbf{Step3. } By \eqref{dz} and Taylor expansion, one has
	\begin{align*}
	& \frac{1}{2}g''(0)\Big[\frac{z(t)}{t}\Big]^2 = \frac{1}{2}g''(0)\Big[\frac{z(t)-p}{t}\Big]^2+o(\frac{1}{t^2}) \qquad \text{as}\ t\rightarrow +\infty,\\
	\Rightarrow~~ & z(t)=\frac{p}{2}+o(1) \qquad \text{as}\ t\rightarrow +\infty.
	\end{align*}
	thus
	\begin{equation*}
	\begin{aligned}
	(f')^{-1}(\frac{z(t)}{t})&=(f')^{-1}(0)+\frac{1}{f''\Big((f')^{-1}(0)\Big)}\frac{z(t)}{t}+o(\frac{1}{t})\\
	&=\ovl{u}+\frac{p}{2f''(\ovl{u})t}+o(\frac{1}{t})
	\qquad \text{as}\ t\rightarrow +\infty.
	\end{aligned}
	\end{equation*}
	The estimates of $(f')^{-1}(\frac{z(t)-p}{t})$ is similar. So \eqref{lb11} is proved.
	
	\vspace{0.5cm}
	
	Combing Step 1-3, one can finish the proof of Theorem \ref{thmper}.
\end{proof}


\bibliographystyle{amsplain}
\bibliography{bib}

\end{document}